\documentclass[11pt]{amsart}
\usepackage{tabularx,booktabs,tikz}
\usepackage{caption}
\usepackage{amsmath}
\usepackage{amsfonts}

\usepackage{amscd}
\usepackage{amsthm}
\usepackage{amssymb} 
\usepackage{latexsym}
\usepackage{eufrak}
\usepackage{euscript}
\usepackage{epsfig}
\usepackage{graphics}
\usepackage{array}
\usepackage{enumerate}
\usepackage{dsfont}
\usepackage{color}
\usepackage{wasysym}
\usepackage{hyperref}
\usepackage{pdfsync}

\newcommand{\Hmm}[1]{\leavevmode{\marginpar{\tiny%
$\hbox to 0mm{\hspace*{-0.5mm}$\leftarrow$\hss}%
\vcenter{\vrule depth 0.1mm height 0.1mm width \the\marginparwidth}%
\hbox to
0mm{\hss$\rightarrow$\hspace*{-0.5mm}}$\\\relax\raggedright #1}}}

\newtheorem{theorem}{Theorem}[section]
\newtheorem{lemma}[theorem]{Lemma}
\newtheorem{corollary}[theorem]{Corollary}

\usepackage{natbib}
\begin{document}

\bibliographystyle{abbrv}

\title[Constrained variational problems on perturbed lattice graphs]{Constrained variational problems on perturbed lattice graphs}

\author{Weiqi Guan}
\address{Weiqi Guan: School of Mathematical Sciences, Fudan University, Shanghai 200433, China}
\email{wqguan24@m.fudan.edu.cn}

\begin{abstract}
	 In this paper, we solve some constrained variational problems on perturbed lattice graphs $G$. The first problem addresses the existence of ground state normalized solutions to Schr\"odinger equations
     \begin{equation*}
\left\{
    \begin{aligned}
        &-\Delta_{G} u+\lambda u=\vert u\vert^{p-2}u,x\in G\\
        &\Vert u\Vert_{l^2(G)}^2=a.
    \end{aligned}
    \right.    
\end{equation*}
We prove that if the graph is obtained by deleting finite edges in lattice graphs while maintaining connectivity, then there exists a threshold $\alpha_G\in[0,\infty)$ such that there do not exist ground state normalized solution if $0<a<\alpha_G$, and there exists a ground state normalized solution if $a>\alpha_G.$ If the graph is obtained by adding finite edges $E^{'}$ to lattice graphs, we prove that there exist $E^{'}$ and $a_1$ such that for all $a>a_1,$ there do not exist ground state normalized solutions.\\

The second problem concerns the existence of an extremal function for the Sobolev inequality. If the graph $G$ is obtained by deleting finite edges in lattice graphs while maintaining connectivity,  for the Sobolev super-critical regime, we prove that there exists an extremal function. for the Sobolev critical regime, we prove that there exists $G$ such that extremal can be attained. If the graph is obtained by adding finite edges $E^{'}$ to lattice graphs, we prove that there exists $E^{'}$ such that there does not exist an extremal function.
\end{abstract}
\par
\maketitle

\bigskip

\section{Introduction}\label{sec:intro}
Constrained variational problems arise in many problems. Recently, people are paying much attention to the normalized solutions to Schr\"odinger equations on $\mathbb{R}^d$; see e.g. \cite{MR3147450, MR4096725, MR4107073, MR4701352} and references therein. For the existence of normalized solutions to Schr\"odinger equations on graphs $G=(V, E).$
\begin{equation}\label{equa:normalized}
\left\{
    \begin{aligned}
        &-\Delta_{G} u+\lambda u=\vert u\vert^{p-2}u,x\in G\\
        &\Vert u\Vert_{l^2(G)}^2=a.
    \end{aligned}
    \right.    
\end{equation}
Where $\Delta_{G}$ is the Laplacian on $G$. The search for a normalized solution to problem (\ref{equa:normalized}) can be reduced to finding a critical point of the functional
\[
\Phi_{G}^p(u) = \frac{1}{2}\int_{G}\vert\nabla_G u\vert_2^2dx-\frac{1}{p}\int_{G}\vert u\vert^pdx
\]
over the sphere $S^a_G:=\{u\in l^2(G):\Vert u\Vert_{l^2(G)}^2=a\}$. See section \ref{sec:pre} for definitions. If $\Phi_{G}^p(u)$ is bounded from below on $S^a_G$, we set
\begin{equation}\label{pro:norm}
E_{G}^{a,p}:=\inf_{u\in S^a_G}\Phi_{G}^p(u).    
\end{equation}

 When $G$ is a finite graph, Yang et al. \cite{YANG2024128173} proved that $E_{G}^{a,p}$ is attained for all $a>0$. When $G$ is the lattice graph $\mathbb{Z}^d$, M. I. Weinstein \cite{MR1690199} used discrete concentration compactness principle to prove the existence of a critical parameter $\alpha_{\mathbb{Z}^d}$ such that $E_{\mathbb{Z}^d}^{a,p}$ is attainable when $0 < a<\alpha_{\mathbb{Z}^d}$, but becomes unattainable for $a >\alpha_{\mathbb{Z}^d}$. Mass critical index $2+\frac{4}{d}$ plays a crucial role in depending whether $\alpha_{\mathbb{Z}^d}>0$. M. I. Weinstein further showed that $\alpha_{\mathbb{Z}^d}=0$ if $2<p<2+\frac{4}{d}$ and $\alpha_{\mathbb{Z}^d}>0$ if $p\geq 2+\frac{4}{d}$. The detailed proof in the case $2<p<2+\frac{4}{d}$ is given in \cite{MR4608774}. Afterwards, the author \cite{guan2025existencenormalizedsolutionsnonlinear} extended M. I. Weinstein's result to Schr\"odinger equations with nonlinearity satisfying Berestycki-Lions type condition and potential which is a trapping potential or well potential. \\

Another constrained problem is the existence of extremal functions for Sobolev inequalities on $\mathbb{R}^d$. For $d>1$, the classical concentration-compactness principle established by P.L.Lions \cite{MR778970, MR778974, MR834360, MR850686} can be applied to find the extremal functions. For the noncompact, complete Riemannian manifold $M^{d}$ with $d\geq2$, non-negative Ricci curvature and Euclidean volume growth, the sharp Sobolev inequality is established in \citep{balogh2023sharp} for $1<p< d$
\[
\Vert u\Vert_{L^{p^{*}}(M^d)}\leq AVG(M^d)^{-\frac{1}{d}}S_{d,p}\Vert\nabla u\Vert_{L^p(M^d)},
\]
where 
\[
AVG(M^d) = \lim_{R\rightarrow\infty}\frac{Vol_{g}(B_R(x))}{\omega_dR^d}>0
\]
is the Euclidean volume growth condition, $S_{d,p}$ is the best Sobolev constant in Euclidean space. By Bishop-Gromov comparison theorem $AVG(M^d)\in(0,1]$. It is proved in \cite{nobili2025fine} that there exists an extremal function if and only if $M^d$ is isometric to Euclidean space. Since $AVG(M^d)=1$ if and only if $M^d$ is isometric to Euclidean space \cite{colding1997ricci,de2018non}. Hence if $0<AVG(M^d)<1$, there does not exist an extremal function. Consider the  existence of extremal functions for discrete Sobolev inequalities on lattice graphs
\[
S_{\mathbb{Z}^d}^{p,q}\Vert u\Vert_{l^q(\mathbb{Z}^d)}\leq \Vert u\Vert_{D^{1,p}(\mathbb{Z}^d)},
\]
where $d\geq 3$, $1\leq p< d,$$q\geq\frac{dp}{d-p}$, $u\in D^{1,p}(\mathbb{Z}^d)$, $\Vert u\Vert_{D^{1,p}(\mathbb{Z}^d)}=(\int_{\mathbb{Z}^d}\vert\nabla_{\mathbb{Z}^d} u\vert_p^pdx)^{\frac{1}{p}}$ and $D^{1,p}(\mathbb{Z}^d)$ is the completion of $C_0(\mathbb{Z}^d)$ under the $D^{1,p}(\mathbb{Z}^d)$ norm; see \cite{HUA20156162} for more details about discrete Sobolev inequality. This problem reduces to studying the attainability of the variational problem:
\begin{equation*}
    J_{\mathbb{Z}^d}^{a,p,q} = \inf\limits_{u\in l^q(\mathbb{Z}^d),\Vert u\Vert_{l^q(\mathbb{Z}^d)}^q=a}\int_{\mathbb{Z}^d}\vert\nabla_{\mathbb{Z}^d} u\vert^p_pdx.
\end{equation*}

Bobo Hua et al. \cite{MR4328634} used the discrete concentration compactness principle to prove that $J_{\mathbb{Z}^d}^a$ is attained if $q>\frac{dp}{d-p}$. The Sobolev critical case $q=\frac{dp}{d-p}$ remains open. For the general infinite, locally finite graph, a natural question arise about the validity of Sobolev inequality and the existence of extremal function. Similar to works on noncompact manifold, the Sobolev inequality and the existence should concerns with the curvature or structure of graphs. In this paper we consider the perturbation the lattice graphs $\mathbb{Z}^d=(V,E)$ to a connected graph $G=(V,\tilde{E})$, where $\tilde{E}= E\backslash E^{'}$ or $\tilde{E}= E\cup E^{'}$, $E^{'}$ is composed of a finite set of edges. The reason for considered perturbation is that deleting edges make the graph less connected than lattice graph and hence it is easily to check that the resulting graph $G=(V,E\backslash E^{'})$ satisfying non-positive Ollivier-Ricci curvature condition $Ric_{ij}\leq 0$ for all edge $(i,j)\in E\backslash E^{'}$. In contrast, adding edges make the graph more connected than lattice graph and hence the resulting graph $G=(V,E\cup  E^{'})$ satisfying curvature condition $Ric_{ij}\geq 0$ for all edge $(i,j)\in E$. The discrete Sobolev inequalities also hold on $G$

\begin{equation}\label{Soboleb G}
S_{G}^{p,q}\Vert u\Vert_{l^q(G)}\leq\Vert u\Vert_{D^{1,p}(G)},    
\end{equation}
with the same conditions on $p,q$ and similar definition for $D^{1,p}(G).$ We refer to Lemma \ref{sobolev} for details. Similarly, finding the extremal function to (\ref{Soboleb G}) reduces to studying the attainability of the variational problem:
\begin{equation}\label{pro:sobolev}
J_{G}^{a,p,q} = \inf\limits_{u\in l^q(G),\Vert u\Vert_{l^q(G)}^q=a}\int_{G}\vert\nabla_{G} u\vert^p_pdx.    
\end{equation}

 However, in the setting of the lattice graphs, the proofs of the problems (\ref{pro:norm})(\ref{pro:sobolev}) are essentially based on the translation invariance of functionals over the lattice graphs. Take the problem (\ref{pro:sobolev}) for example. Consider the minimizing sequence $\{u_n\}$ such that $u_n\rightharpoonup u$ in $l^{p}(\mathbb{Z^d})$.
 Then we need to exclude the case $u=0$. Excluding the vanishing case in concentration compactness principle  $\limsup\limits_{n\rightarrow\infty}\Vert u_n\Vert_{l^{\infty}(\mathbb{Z}^d)}>0$ alone can not ensure that $u\neq 0$. A simple example is the minimizing sequence $\{u(x+x_n)\}$ with $x_n\rightarrow\infty$ and $u$ the extremal function. Hence we need to translate the point $x_n$ achieving maximum of $u_n$ to the original point and consider sequence $\{u_n(x+x_n)\}$, then by translation invariance $\int_{\mathbb{Z}^d}\vert\nabla_{\mathbb{Z}^d} u_n(x+x_n)\vert^p_pdx=\int_{\mathbb{Z}^d}\vert\nabla_{\mathbb{Z}^d} u_n(x)\vert^p_pdx$, $\{u_n(x+x_n)\}$ is still minimizing sequence. On the general infinite graphs, the functionals lack translation invariance
 \[
 \int_{G}\vert\nabla_{G} u_n(x+x_n)\vert^p_pdx\neq\int_{G}\vert\nabla_{G} u_n(x)\vert^p_pdx.
 \] 
 This difficulty hinders us from extending results to general infinite graphs and we need to develop a new method to overcome this difficulty. In this paper, we develop a comparison method and apply this method to constrained problems (\ref{pro:norm})(\ref{pro:sobolev}) on the perturbed lattice graphs. The paradigm of this method is hopefully to be widely applied to other problems such as the existence of ground state solution. The following is our first main result.
\begin{theorem}\label{Thm 1}
    Let $G=(V,E\backslash {E}^{'})$ be a connected graph obtained by deleting finite edges $\mathcal{E}^{'}$ in lattice graphs $\mathbb{Z}^d$ but keep the connectivity. Then we have\\
    (1)If $d\geq 1$ and $p>2$, then there exists a threshold $\alpha_G$ such that $E_{G}^{a,p}$ can be attained if $0< a <\alpha_G$, and $E^{a,p}_G$ can not be attained if $a>\alpha_G$.\\
    (2)If $d\geq 3$, $1\leq p<d$ and $q>\frac{dp}{d-p}$, then for all $a>0$, $J_{G}^{a,p,q}$ can be attained.\\
    (3)If $d\geq 3$, $1\leq p<d$, $q=\frac{dp}{d-p}$ and for some $a_0>0$, $J^{a_0,p,q}_{G}<J^{a_0,p,q}_{\mathbb{Z}^d}$, then for all $a>0$, $J^{a,p,q}_G$ can be attained. Moreover, there exists $G$ satisfying conditions.
\end{theorem}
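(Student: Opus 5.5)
The plan rests on one comparison principle, used for all three parts. Deleting the finite edge set $E'$ can only lower the Dirichlet energy: $\int_G|\nabla_G u|_p^p\le\int_{\mathbb{Z}^d}|\nabla_{\mathbb{Z}^d}u|_p^p$ for every $u$. Hence
\[
E_G^{a,p}\le E_{\mathbb{Z}^d}^{a,p},\qquad J_G^{a,p,q}\le J_{\mathbb{Z}^d}^{a,p,q}.
\]

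\textbf{Step 1 (key lemma).} I first prove that \emph{strict} inequality in either comparison implies attainment. Take a minimizing sequence $u_n$ and extract a weak (pointwise) limit $u$. Since $E'$ is finite, the graphs $G$ and $\mathbb{Z}^d$ coincide outside a finite ball $B$.
\begin{itemize}
\item If mass, or $l^q$ norm, escapes to infinity, the escaping part only sees lattice edges. Its energy is then bounded below by the lattice problem.
\item Combining this with a Brezis--Lieb splitting and the subadditivity of $a\mapsto E_{\mathbb{Z}^d}^{a,p}$ (respectively the homogeneity $J^{a}=a^{p/q}J^1$), any loss of compactness forces $E_G^{a,p}\ge E_{\mathbb{Z}^d}^{a,p}$ (respectively $J_G\ge J_{\mathbb{Z}^d}$). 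This contradicts strictness.
\end{itemize}
So $u$ carries the full constraint and is a minimizer. This replaces translation invariance: we never translate, we only compare with the lattice at infinity.

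\textbf{Step 2 (part (1)).} Set $\alpha_G:=\sup\{a>0: E_G^{b,p}<E_{\mathbb{Z}^d}^{b,p}\ \text{for all } b\in(0,a)\}$. By Step 1, $E_G^{a,p}$ is attained for $0<a<\alpha_G$. For $a>\alpha_G$ I must show non-attainment, in two substeps.
\begin{itemize}
\item \emph{Equality holds for $a>\alpha_G$.} I need $E_G^{a,p}=E_{\mathbb{Z}^d}^{a,p}$ there. This requires that the strict-inequality set $\{a: E_G^{a,p}<E_{\mathbb{Z}^d}^{a,p}\}$ is an initial interval. I would try a scaling argument. If $E_G^{a}<E_{\mathbb{Z}^d}^{a}$ with minimizer $u$, test $\sqrt{b/a}\,u$ for $b<a$, using that $t\mapsto \Phi(\sqrt t u)/t$ is decreasing for $p>2$. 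Then compare with the lattice quantity $E_{\mathbb{Z}^d}^{b}/b$. The needed direction of this comparison is exactly the delicate point, and I regard this monotonicity step as the main obstacle.
\item \emph{Equality rules out a minimizer.} Suppose a minimizer $u$ exists on $G$ with $E_G^{a,p}=E_{\mathbb{Z}^d}^{a,p}$. Then $\Phi_{\mathbb{Z}^d}^p(u)\le\Phi_G^p(u)+0=E_{\mathbb{Z}^d}^{a,p}$, since the deleted-edge terms are nonnegative. So $u$ also minimizes on $\mathbb{Z}^d$, and equality forces $u(x)=u(y)$ on every deleted edge $(x,y)\in E'$. Comparing the two Euler--Lagrange equations at an endpoint $x$, the Laplacians then agree there, so no contradiction arises directly. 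Instead I use that a lattice minimizer $|u|$ is strictly positive and satisfies a strict discrete maximum-type structure. Propagating the equalities $u(x)=u(y)$ through the equation should then yield a contradiction.
\end{itemize}

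\textbf{Step 3 (part (2)).} For $q>\frac{dp}{d-p}$, $J_{\mathbb{Z}^d}^{a,p,q}$ is attained by some $w$ (Hua et al.). Pick a deleted edge $(x,y)$ and translate $w$ so that $w(x)\ne w(y)$. This is possible, since otherwise $w$ would be constant along an entire edge direction, contradicting $w\in l^q$. The translated $w$ has the same $l^q$ norm, and its $G$-energy loses the positive term $|w(x)-w(y)|^p$. Hence $J_G^{a,p,q}<J_{\mathbb{Z}^d}^{a,p,q}$, and Step 1 applies.

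\textbf{Step 4 (part (3)).} At $q=\frac{dp}{d-p}$ we use the homogeneity $J^{a}=a^{p/q}J^{1}$ on both graphs. The assumed strict inequality at one $a_0$ therefore holds for all $a>0$, and Step 1 gives attainment. For the existence claim, I delete all but one edge at some vertex $v$, keeping $G$ connected, and test with $\delta_v$. Its $G$-energy is $1$, while $J_{\mathbb{Z}^d}^{1,p,q}=(S^{p,q}_{\mathbb{Z}^d})^p$. I would compare $1$ with this lattice constant. If it is not already smaller, I delete more edges around a finite cluster so that a cluster indicator has energy per $l^q$ mass below $J_{\mathbb{Z}^d}^{1}$. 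Such a cluster can be made large with a small boundary while staying connected through a single bridge, which gives the strict inequality.
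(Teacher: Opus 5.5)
\textbf{Gap in part (1).} The step you flagged in Step 2 is not merely delicate; it is false. Part (1) as printed has the two regimes interchanged relative to the abstract and to the theorem actually proved in Section 3. That theorem gives non-attainment for $0<a<\alpha_G$ and attainment for $a>\alpha_G$, with $\alpha_G=\inf\{a: E^{a,p}_G<0\}$. Your Step 2 targets the printed version.

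The set where the comparison is strict is upward closed, not an initial interval. If $E^{a,p}_{\mathbb{Z}^d}<0$, a lattice minimizer exists (Lemma \ref{lemma:thresholdlattice}(2)), and your own Step 3 translation gives $E^{a,p}_G<E^{a,p}_{\mathbb{Z}^d}$; this is Lemma \ref{lemma:normalizedless}(2). If $E^{a,p}_{\mathbb{Z}^d}=0$, strictness just means $E^{a,p}_G<0$. Hence $\{a: E^{a,p}_G<E^{a,p}_{\mathbb{Z}^d}\}=\{a: E^{a,p}_G<0\}$, and $a\mapsto E^{a,p}_G$ is non-increasing. Correspondingly, the scaling $u\mapsto\sqrt{b/a}\,u$ transports strict information only to $b>a$.

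So for $2<p<2+\frac{4}{d}$ your $\alpha_G$ is $+\infty$ and everything is attained. For $p\ge 2+\frac{4}{d}$ your defining set is empty. The lattice Gagliardo--Nirenberg inequality $\Vert u\Vert_{l^p}^p\le C\Vert u\Vert_{l^2}^{p-2}\int_{\mathbb{Z}^d}|\nabla_{\mathbb{Z}^d}u|_2^2dx$ passes to $G$ with $\int_G|\nabla_G u|_2^2dx$ on the right, because each deleted-edge difference is bounded by a path sum in $G$ (as in the proof of Lemma \ref{sobolev}). Hence $E^{a,p}_G=0$ is not attained for small $a$. For large $a$, however, $E^{a,p}_G\le da-\frac{1}{p}a^{p/2}<0$ and a minimizer exists. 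In this regime no threshold of the printed type exists, so no version of your Step 2 can work.

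\textbf{The missing idea.} Split according to the sign of $E^{a,p}_G$, as the paper does. For $a<\alpha_G$ one has $E^{a,p}_G=0$. A minimizer $u$ would give $E^{b,p}_G\le\Phi^p_G(\sqrt{b/a}\,u)<\frac{b}{a}E^{a,p}_G=0$ for every $b>a$, contradicting the definition of $\alpha_G$. For $a>\alpha_G$ one has $E^{a,p}_G<0$, so the two cases above give $E^{a,p}_G<E^{a,p}_{\mathbb{Z}^d}$, and your Step 1 gives attainment. Your maximum-principle substep is unnecessary in any case: a $G$-minimizer at equality would also be a lattice minimizer, and translating it contradicts equality at once.

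\textbf{Parts (2) and (3).} These follow the paper. Step 1 is Lemmas \ref{lemma:normalizedcompare} and \ref{lemma:sobolevcompare}, and Step 3 is the paper's translation argument. Step 4 is the homogeneity argument plus the paper's construction $E'=\partial B_R\setminus\{e_R\}$ with test function $|B_R|^{-1/q}\mathbf{1}_{B_R}$.

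One correction to Step 1 for $E^{a,p}_G$. In the dichotomy case, Brezis--Lieb gives $E^{a,p}_G\ge E^{\beta,p}_G+E^{a-\beta,p}_{\mathbb{Z}^d}$. Lattice subadditivity cannot turn this into $E^{a,p}_G\ge E^{a,p}_{\mathbb{Z}^d}$, since $E^{\beta,p}_G\le E^{\beta,p}_{\mathbb{Z}^d}$. The paper closes this case on $G$ alone: the chain of inequalities forces the weak limit to minimize at mass $\beta$, and strict subadditivity (Lemma \ref{funproper}(3)) then gives the contradiction. The comparison hypothesis is needed only to exclude $u=0$. For $J$, your homogeneity already supplies the required strictness.
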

     Theorem \ref{Thm 1} (1)(2) extends results of \cite{MR1690199,MR4328634} to $G=(V,E\backslash {E}^{'})$. Since $G=(V,E\backslash {E}^{'})$ satisfying non-positive Ollivier-Ricci curvature condition, Theorem \ref{Thm 1} (2)(3) is a opposite result in a non-positive Ollivier-Ricci curvature graph comparing to nonexistence of extremal function in non-negative Ricci curvature manifold\cite{nobili2025fine}. Also since \cite{MR4328634} only resolved the regime $(\frac{dp}{d-p},\infty)$, the Sobolev critical case $q=\frac{dp}{d-p}$ still remains open in lattice graphs. Theorem \ref{Thm 1} (3) first give the existence of extremal function for the Sobolev critical exponent in graphs $G=(V,E\backslash {E}^{'})$ satisfying $J^{a_0,p,q}_{G}<J^{a_0,p,q}_{\mathbb{Z}^d}$. A natural question arises whether Theorem \ref{Thm 1} holds for general infinite graphs or at least for arbitrary perturbations of the lattice graphs. In the next main theorem, we show that this is not true.
\begin{theorem}\label{Thm 2}
    Let $G=(V,E\cup E^{'})$ be a graph obtained by adding finite edges $E^{'}$ in lattice graphs $\mathbb{Z}^d$. Then we have\\
    (1) If $d\geq 1$ and $p>2$, then there exist $E^{'}$ and $a_1$ such that $E_{G}^{a,p}$ can not be attained for $a> a_1$.\\
    (2) If $d\geq 3$, $1\leq p<d$ and $q\geq\frac{dp}{d-p}$, then there exists a set $E^{'}$ such that $J_{G}^{a,p,q}$ can not be attained for all $a>0$.
\end{theorem}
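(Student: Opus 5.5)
Both parts of Theorem \ref{Thm 2} rest on one comparison principle, the reverse of the one behind Theorem \ref{Thm 1}. Adding edges only increases the Dirichlet energy: for every $u$,
\[
\int_G|\nabla_G u|_p^p\,dx=\int_{\mathbb{Z}^d}|\nabla_{\mathbb{Z}^d} u|_p^p\,dx+\sum_{(x,y)\in E'}|u(x)-u(y)|^p\ \geq\ \int_{\mathbb{Z}^d}|\nabla_{\mathbb{Z}^d} u|_p^p\,dx ,
\]
with the analogous inequality for the quadratic part of $\Phi^p_G$. Hence $J^{a,p,q}_G\geq J^{a,p,q}_{\mathbb{Z}^d}$ and $E^{a,p}_G\geq E^{a,p}_{\mathbb{Z}^d}$. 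Conversely, translating a fixed function far away from the finite set of endpoints of $E'$ makes the extra term vanish in the limit, which gives equality:
\[
J^{a,p,q}_G=J^{a,p,q}_{\mathbb{Z}^d},\qquad E^{a,p}_G=E^{a,p}_{\mathbb{Z}^d}.
\]
Nonexistence then follows by contradiction. Suppose $u$ attains the value on $G$. Then $u$ is also admissible on $\mathbb{Z}^d$. Since the extra term is nonnegative and the two infima coincide, $u$ is a minimizer on $\mathbb{Z}^d$ and
\[
\sum_{(x,y)\in E'}|u(x)-u(y)|^p=0,
\]
so $u(x)=u(y)$ for every added edge. The task is to choose $E'$ so that this is impossible for any $\mathbb{Z}^d$-minimizer.

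\textbf{Part (2).} First I pass from $u$ to $|u|$, which does not increase the energy, so the minimizer may be taken nonnegative. It is a weak solution of the Euler--Lagrange equation on $\mathbb{Z}^d$,
\[
-\Delta_p u=\mu u^{q-1},\qquad \mu>0 .
\]
The strong maximum principle on the connected graph $\mathbb{Z}^d$ then gives $u>0$ everywhere. This uses that $\Delta_p u(x)$ is a sum of $|u(y)-u(x)|^{p-2}(u(y)-u(x))$ over neighbours. For $p=1$ I would need a separate argument: I would use $\ell^1$-type isoperimetry to show that $|u|$ is a multiple of an indicator function, and then pick $E'$ joining a point to a far point.

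For $p>1$, a key observation is that the minimizer must decay, since $u\in l^q$ and $u>0$. I would take $E'$ to be a single edge $\{0,z\}$ and try to show that no positive minimizer can satisfy $u(0)=u(z)$ for every relevant translate. The catch is that minimizers on $G$ are not translation invariant, because $E'$ is fixed. So instead I would argue as follows: a minimizer $u$ on $G$ equals some $\mathbb{Z}^d$-minimizer $v$ with $v(0)=v(z)$. Then I would use a rearrangement or reflection across the hyperplane bisecting $0$ and $z$ to produce a competitor with strictly smaller energy, contradicting minimality.

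This step is the main obstacle, because I do not know the shape of the lattice extremals. A simpler route may be available. I would add \emph{many} edges, namely all edges of a complete graph on a large cube $Q_R$, so that $u$ must be constant on $Q_R$. A constant value $c>0$ on $Q_R$ is incompatible with $u\in l^q$ for large $R$ only in a limiting sense, so I would then compare energies directly: the constant plateau forces a cost that strictly exceeds $J_{\mathbb{Z}^d}$. I would try this version first.

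\textbf{Part (1).} The same identity gives $E^{a,p}_G=E^{a,p}_{\mathbb{Z}^d}$. By Weinstein's result, for $a>\alpha_{\mathbb{Z}^d}$ (take $a_1=\alpha_{\mathbb{Z}^d}$) the value $E^{a,p}_{\mathbb{Z}^d}$ is attained by a positive solution of
\[
-\Delta u+\lambda u=u^{p-1}.
\]
A minimizer on $G$ would again be a positive minimizer on $\mathbb{Z}^d$ that is constant across $E'$. I would choose $E'$ to contain the edges of a complete graph on a cube $Q$. Constancy on $Q$ together with the lattice equation at interior points forces $u^{p-1}=\lambda u$ on the interior of $Q$. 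Combined with the positivity and $l^2$-decay of the solution, this should yield a contradiction for $a$ large. This is why the statement needs both the existence of a suitable $E'$ and the threshold $a_1$.
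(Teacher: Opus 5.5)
Your framework matches the paper's: equal infima via translating far from $E'$, a $G$-minimizer being a $\mathbb{Z}^d$-minimizer that is constant across every added edge, and the Euler--Lagrange equation with the strong maximum principle. However, in neither part do you close the contradiction.

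\textbf{Part (2).} The reflection route cannot work as stated. Nothing has energy strictly below $J^{a,p,q}_{\mathbb{Z}^d}$, so no such competitor exists. Moreover, the single constraint $v(0)=v(z)$ is not contradictory in itself: an extremal invariant under the lattice symmetry $x\mapsto z-x$ would satisfy it and then attain $J^{a,p,q}_G$.

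In the cube route, ``the plateau forces a cost strictly exceeding $J^{a,p,q}_{\mathbb{Z}^d}$'' is an unproved placeholder. The missing step is one line from what you already wrote, $-\Delta_p u=\mu u^{q-1}$ with $\mu>0$ and $u>0$. At any vertex $x$ all of whose neighbours carry the value $u(x)$, one has $\Delta_p u(x)=0$, hence $\mu u(x)^{q-1}=0$, a contradiction. Such an $x$ is any interior point of your cube; the paper uses $x=0$ after joining $0$ and each $\pm e_k$ to all of $B_R$. No largeness of $R$ is needed.

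Your $p=1$ plan cannot succeed. By the coarea formula, Minkowski's inequality and the lattice isoperimetric inequality $|\partial A|\ge 2d|A|^{(d-1)/d}\ge 2d|A|^{1/q}$ for finite nonempty $A$, one gets $J^{a,1,q}_{\mathbb{Z}^d}=2d\,a^{1/q}$, attained by $a^{1/q}\delta_{x_0}$. Taking $x_0$ off the endpoints of $E'$ shows that $J^{a,1,q}_G$ is attained for every finite $E'$. So the statement is actually false for $p=1$; the paper's Lagrange-multiplier argument tacitly needs $p>1$.

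\textbf{Part (1).} You correctly reach $\lambda=u(x)^{p-2}$ on the plateau, but ``positivity and $l^2$-decay'' is not the mechanism. What is needed is quantitative. Constancy on the plateau $Q$ gives $|Q|u(x)^2\le a$, so $\lambda\le (a/|Q|)^{(p-2)/2}$. Testing the equation with $u$ gives
\[
\Phi^p_G(u)=\Big(\frac12-\frac1p\Big)\int_G|\nabla_G u|_2^2\,dx-\frac{\lambda a}{p}\ \ge\ -\frac{a^{p/2}}{p\,|Q|^{(p-2)/2}},
\]
while $E^{a,p}_{\mathbb{Z}^d}\le\Phi^p_{\mathbb{Z}^d}(\sqrt a\,\delta_0)=da-\frac1p a^{p/2}\le-\frac1{2p}a^{p/2}$ once $a^{(p-2)/2}\ge 2pd$. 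Taking $|Q|^{(p-2)/2}>2$ then contradicts $\Phi^p_G(u)=E^{a,p}_G=E^{a,p}_{\mathbb{Z}^d}$.

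This is also why $a_1$ must be fixed large, depending only on $d,p$, before the plateau is chosen. Your choice $a_1=\alpha_{\mathbb{Z}^d}$ is not justified: for $a$ just above $\alpha_{\mathbb{Z}^d}$ the infimum is close to $0$ and the argument gives nothing. Weinstein's existence result is also unnecessary here, since the contradiction hypothesis already supplies the minimizer.
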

Theorem \ref{Thm 2} can be seen as a generalization of the nonexistence of extremal function \cite{nobili2025fine} to the discrete setting. This paper is organized as follows: In section \ref{sec:pre}, we recall basic definitions and give some useful lemmas. In section \ref{sec:delete}, we prove Theorem \ref{Thm 1}. In section \ref{sec:addingedges}, we prove Theorem \ref{Thm 2}.
\section{Preliminaries}\label{sec:pre}
In this section, we recall the basic setting for analysis on undirected connected locally finite graphs. Let $G=(V, E)$ be an undirected connected locally finite graph, where $V$ is a countable set of vertices, and $E$ is the edge set. We denote by $x\sim y$ if $(x,y)\in E$. Denote $C(G)$ as functions defined on $V$ and $C_0(G)$ as functions with finite support. Then for a function $u\in C(G),$ the Laplacian $\Delta_G $ is defined as follows.
\[
\Delta_G u(x)= \sum_{y\sim x}(u(y)-u(x)).
\]
The $l^p$ summable function space on the lattice is defined as 
\[
l^p(G):=\{u\in C(G):\Vert u\Vert_{l^p(G)}<\infty\},
\]
where $p\in [1,\infty]$ and $l^p$ norm of a function $u\in C(G)$ is defined as 
\begin{equation*}
\Vert u\Vert_{l^p(G)}:=\left\{
\begin{aligned}
    &(\sum_{x\in G}\vert u(x)\vert^p)^{\frac{1}{p}},1\leq p\textless\infty\\
    &\sup_{x\in G}\vert u(x)\vert,p=\infty.
\end{aligned}
\right.
\end{equation*}
When $u\in l^1(G),$ the integration of $u$ over $G$ is defined as
\[
\int_{G}udx=\sum_{x\in V}u(x).
\]
For any $u\in C(G),$ the $p-$norm of the gradient of $u$ at $x$ is defined as 
\[
\vert \nabla_{G} u\vert_p(x)=(\frac{1}{2}\sum_{y\sim x}\vert u(y)-u(x)\vert^p)^{\frac{1}{p}}.
\]
Furthermore, $D^{1,p}$ norm is defined as $D^{1,p}(u)=(\int_{G}\vert\nabla_G u\vert_p^pdx)^{\frac{1}{p}}$, and $D^{1,p}(G)$ is the completion of the $C_0(G)$ under $D^{1,p}(G)$ norm. Lattice graphs $\mathbb{Z}^d=(V,E)$ is composed of vertices $V=\{x=(x_1,...,x_d):x_i\in\mathbb{Z},1\leq i\leq d\}$ and edges $E=\{(x,y):\sum\limits_{1\leq i\leq d}\vert x_i-y_i\vert=1\}$. $\pm e_k$ denote vertex $(0,...,\pm1,...,0)$ where only $k$-th component is $\pm1$ and other components are $0$. For any $x,y\in\mathbb{Z}^d$, we define $x+y:=(x_1+y_1,...,x_d+y_d)$. The ball $B_R(x)$ denotes the set $\{y:\vert y-x\vert_{\infty}<R\}$, where $\vert y-x\vert_{\infty}=\max\limits_{1\leq i\leq d}\vert y_i-x_i\vert.$ We abbreviate $B_R(0)$ as $B_R$ for simplicity. The boundary of the ball $B_R(x)$ is defined as $\partial B_R(x)=\{(y,z):y\in B_R(x), z\notin B_R(x)\}$First, we recall the basic lemma; see \citep{MR4328634} for this lemma in the setting of the lattice graphs.
\begin{lemma}\label{normcontrol}
For $u\in l^{p}(G)$, we have $u\in l^{q}(G)$ for all $q\textgreater p$ and $\Vert u\Vert_{l^q(G)}\leq\Vert u\Vert_{l^p(G)}$.
\end{lemma}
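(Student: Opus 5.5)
The plan is to reduce to a pointwise comparison after normalizing by the $l^p$ norm. The case $u=0$ is trivial, so assume $u\neq 0$. Since $\Vert\cdot\Vert_{l^r(G)}$ is homogeneous of degree one, it suffices to treat $v=u/\Vert u\Vert_{l^p(G)}$. This $v$ satisfies $\Vert v\Vert_{l^p(G)}=1$, and we must show $\Vert v\Vert_{l^q(G)}\leq 1$. The argument only uses that the vertex set is countable and the counting measure, so the graph structure plays no role. The proof on lattice graphs in \citep{MR4328634} therefore carries over verbatim.

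The first step is the pointwise bound $\vert v(x)\vert\leq\Vert v\Vert_{l^p(G)}=1$ for every $x\in V$. This holds because $\vert v(x)\vert^p\leq\sum_{y\in V}\vert v(y)\vert^p$, since every term of the sum is non-negative.

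For finite $q>p$, we have $\vert v(x)\vert^q=\vert v(x)\vert^{q-p}\vert v(x)\vert^p\leq\vert v(x)\vert^p$, because $\vert v(x)\vert\leq 1$ and $q-p>0$. Summing over $x\in V$ gives $\Vert v\Vert_{l^q(G)}^q\leq\Vert v\Vert_{l^p(G)}^p=1$. Hence $v\in l^q(G)$ with $\Vert v\Vert_{l^q(G)}\leq 1$. Rescaling back gives $\Vert u\Vert_{l^q(G)}\leq\Vert u\Vert_{l^p(G)}$.

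For $q=\infty$, the pointwise bound of the first step already gives the claim directly, since it says $\sup_x\vert u(x)\vert\leq\Vert u\Vert_{l^p(G)}$.

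No step here is a genuine obstacle. The only point needing care is to do the normalization before comparing powers, since without it the pointwise inequality $\vert u\vert^q\leq\vert u\vert^p$ can fail.
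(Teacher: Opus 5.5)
Your proof is correct and is essentially the paper's argument. The paper bounds $\vert u(x)\vert^q\leq\vert u(x)\vert^p\Vert u\Vert_{l^\infty(G)}^{q-p}$ and then uses $\Vert u\Vert_{l^\infty(G)}\leq\Vert u\Vert_{l^p(G)}$, which is exactly your normalization step written without rescaling; your separate treatment of $q=\infty$ is a harmless addition.
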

\begin{proof}
    We directly compute that
    \begin{align*}
        \Vert u\Vert_{l^q(G)}^q&=\sum_{x\in G}\vert u(x)\vert^q\\
        &\leq\sum_{x\in G}\vert u(x)\vert^p\vert u\vert_{l^{\infty}(G)}^{q-p}\\
        &\leq (\sum_{x\in G}\vert u(x)\vert^p)^{\frac{q}{p}}\\
        &=\Vert u\Vert_{l^p(G)}^{q}.
    \end{align*}
\end{proof}
Using the above lemma, we prove that $\Phi_G^p(u)$ is bounded from below on the sphere.
\begin{lemma}
    Let $G=(V, E)$ be a connected locally finite graph, then $E^a_G>-\infty$ for all $a>0$.
\end{lemma}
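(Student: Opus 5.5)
The plan is to bound $\Phi_G^p$ from below on $S^a_G$ using only the nonnegativity of the Dirichlet term together with Lemma \ref{normcontrol}. This reduces the claim to an elementary estimate of the $l^p$ norm by the $l^2$ norm. Here $p>2$, which is the standing assumption for the functional $\Phi_G^p$. Since the argument is purely pointwise/summation based, the structure of $G$ beyond being locally finite plays no role.

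First, for $u\in S^a_G$ the gradient term $\frac12\int_G|\nabla_G u|_2^2dx$ is a sum of nonnegative terms. It is finite, because on a locally finite graph $|u(y)-u(x)|^2\le 2|u(y)|^2+2|u(x)|^2$. Summing this over edges gives each vertex's contribution counted once per incident edge. When the degree is bounded (as it is for perturbed lattices) the term is therefore at most $2D\Vert u\Vert_{l^2}^2$, where $D$ is the maximal degree. In any case it is $\ge 0$. Hence
\[
\Phi_G^p(u)\ge -\frac1p\int_G|u|^pdx=-\frac1p\Vert u\Vert_{l^p(G)}^p .
\]

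Second, apply Lemma \ref{normcontrol}, taking its exponents to be $2$ and $p$, with $p>2$. It gives $\Vert u\Vert_{l^p(G)}\le \Vert u\Vert_{l^2(G)}=a^{1/2}$. Therefore
\[
\Phi_G^p(u)\ge -\frac1p a^{p/2}\quad\text{for all }u\in S^a_G,
\]
so $E^{a,p}_G\ge -\frac1p a^{p/2}>-\infty$.

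There is no real obstacle. The only point needing care is that $\Phi_G^p$ is well defined (not $+\infty-\infty$) on $S^a_G$. The nonlinear term is finite by the lemma. For the gradient term one either uses the bounded-degree estimate above or simply notes that a $+\infty$ value only helps the lower bound.
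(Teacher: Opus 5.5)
Your proposal is correct and is exactly the paper's proof: you drop the nonnegative Dirichlet term, then apply Lemma \ref{normcontrol} with exponents $2<p$ to get $\int_G|u|^p\,dx\le a^{p/2}$, hence $E^{a,p}_G\ge -\frac1p a^{p/2}$. Your extra remark on the well-definedness of the gradient term is a harmless addition that the paper leaves implicit.
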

\begin{proof}
    For all $u\in S^a_G$, $\Phi^p_G(u)\geq -\frac{1}{p}\int_G\vert u\vert^pdx$. Also, by Lemma \ref{normcontrol}, $\int_G\vert u\vert^pdx\leq (\int_G\vert u\vert^2dx)^{\frac{p}{2}}=a^{\frac{p}{2}}$. Hence $\Phi^p_G(u)\geq -\frac{1}{p}a^{\frac{p}{2}}$ and we have the result.
\end{proof}
After we recall the classical Brezis-Lieb lemma \citep{MR699419}.
\begin{lemma}\label{BreLieb}
    Let $(\Omega,\Sigma,\tau)$ be a measure space, where $\Omega$ is a set equipped with a $\sigma-$algebra $\Sigma$ and a Borel measure $\tau:\Sigma\rightarrow[0,\infty]$. Given a sequence $\{u_n\}\subset L^p(\Omega,\Sigma,\tau)$ with $0\textless p\textless\infty$. If $\{u_n\}$ is uniformly bounded in $L^p(\Omega,\Sigma,\tau)$ and $u_n\rightarrow u$, $\tau$-a.e. in $\Omega$, then we have that
    \[
    \lim\limits_{n\rightarrow\infty}(\Vert u_n\Vert_{L^p(\Omega)}^p-\Vert u_n-u\Vert_{L^p(\Omega)}^p)=\Vert u\Vert_{L^p(\Omega)}^p.
    \]
\end{lemma}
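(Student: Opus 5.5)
This is the Brezis--Lieb lemma, a classical measure-theoretic result. The plan is the standard $\varepsilon$-splitting argument based on an elementary pointwise inequality, closed by dominated convergence and Fatou.

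The first step is the elementary inequality: for every $\varepsilon>0$ there is $C_\varepsilon>0$ such that
\[
\bigl|\,|a+b|^p-|a|^p\,\bigr|\leq \varepsilon |a|^p + C_\varepsilon |b|^p \quad\text{for all } a,b\in\mathbb{R}.
\]
\begin{itemize}
\item For $0<p\le 1$ it holds with $\varepsilon=0$, $C_\varepsilon=1$, since $|a+b|^p\le |a|^p+|b|^p$.
\item For $p>1$, use the mean value theorem and convexity to get $\bigl||a+b|^p-|a|^p\bigr|\le p(|a|+|b|)^{p-1}|b|$.
\item Then split with Young's inequality, giving at most $\varepsilon|a|^p + C_\varepsilon |b|^p$.
\end{itemize}

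Next, write $v_n=u_n-u$ and apply the inequality with $a=v_n$, $b=u$. Define
\[
W_{\varepsilon,n}:=\Bigl(\bigl|\,|u_n|^p-|v_n|^p-|u|^p\,\bigr|-\varepsilon|v_n|^p\Bigr)^+ .
\]
We need three facts about it.
\begin{itemize}
\item It is dominated: $W_{\varepsilon,n}\le (1+C_\varepsilon)|u|^p$.
\item $u\in L^p$. This follows from Fatou's lemma, because $u_n\to u$ a.e. and $\sup_n\|u_n\|_p<\infty$.
\item $W_{\varepsilon,n}\to 0$ a.e., since $v_n\to 0$ a.e.
\end{itemize}
Dominated convergence then gives $\int W_{\varepsilon,n}\,d\tau\to 0$.

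Finally, we combine the pieces. We have
\[
\int \bigl|\,|u_n|^p-|v_n|^p-|u|^p\,\bigr|\,d\tau \le \int W_{\varepsilon,n}\,d\tau + \varepsilon \sup_n\|v_n\|_p^p .
\]
The supremum is finite because $\|v_n\|_p^p\le c_p(\|u_n\|_p^p+\|u\|_p^p)$ with $c_p=\max(1,2^{p-1})$. Taking $\limsup$ as $n\to\infty$ and then letting $\varepsilon\to 0$ shows the left side tends to $0$. This implies the stated limit.

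The only step needing care is the elementary inequality with an $\varepsilon$-independent structure valid for all $p\in(0,\infty)$. I expect no real obstacle beyond treating $p\le1$ and $p>1$ separately.
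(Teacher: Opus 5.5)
Your proof is correct and is the standard Brezis--Lieb argument. The paper gives no proof of its own here and simply cites \cite{MR699419}, whose proof is this same $\varepsilon$-splitting: the elementary inequality, the truncated function $W_{\varepsilon,n}$ dominated by $(1+C_\varepsilon)|u|^p$, dominated convergence, and then $\varepsilon\to 0$. The only detail worth writing out is that for $0<p\le 1$ the two-sided bound $\bigl||a+b|^p-|a|^p\bigr|\le |b|^p$ uses subadditivity in both directions, which is immediate.
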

A direct consequence of Lemma \ref{BreLieb} is the following.
\begin{corollary}\label{NormBreLieb}
For a bounded sequence $\{u_n\}\subset l^p(G)$ such that $u_n\rightharpoonup u$ in $l^p(G)$ with $p\geq 1$, then for any $q\geq p$ we have\\
    (1)\[
    \lim\limits_{n\rightarrow\infty}(\Vert u_n\Vert_{l^q(G)}^q-\Vert u_n-u\Vert_{l^q(G)}^q)=\Vert u\Vert_{l^q(G)}^q,
    \]
    (2)
    \[
    \lim\limits_{n\rightarrow\infty}(\Vert u_n\Vert_{D^{1,q}(G)}^q-\Vert u_n- u\Vert_{D^{1,q}(G)}^q)=\Vert u\Vert_{D^{1,q}(G)}^q.
    \]
\end{corollary}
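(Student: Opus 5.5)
The statement to prove is Corollary \ref{NormBreLieb}. Both parts follow by applying Lemma \ref{BreLieb} with counting measure on a countable set. The inputs are pointwise convergence and a uniform bound.

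\textbf{Step 1: pointwise convergence and the bound.} On a graph, weak convergence $u_n\rightharpoonup u$ in $l^p(G)$ gives pointwise convergence. For $p>1$ this is because the evaluation $v\mapsto v(x)=\langle v,\delta_x\rangle$ is a continuous linear functional, as $\delta_x\in l^{p'}(G)$. For $p=1$ the same holds, since $\delta_x\in l^\infty(G)=(l^1(G))^*$. So $u_n(x)\to u(x)$ for every $x\in V$, which is convergence $\tau$-a.e. for the counting measure $\tau$ on $V$. Boundedness of $\{u_n\}$ in $l^p(G)$, together with Lemma \ref{normcontrol}, gives $\Vert u_n\Vert_{l^q(G)}\le \Vert u_n\Vert_{l^p(G)}\le C$ for every $q\ge p$. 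Lemma \ref{BreLieb} on $(V,2^V,\tau)$ with exponent $q$ then yields (1) directly. Here $u\in l^q(G)$ by Fatou's lemma.

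\textbf{Step 2: the gradient identity.} For (2), work on the measure space $\Omega=\{(x,y):x\in V,\ y\sim x\}$ of oriented edges, with counting measure weighted by $\frac12$. To $v$ associate the edge function $(\nabla v)(x,y)=v(y)-v(x)$. Then
\[
\Vert v\Vert_{D^{1,q}(G)}^q=\int_\Omega |\nabla v|^q\,d\tau .
\]
Pointwise convergence of $u_n$ on vertices gives $\nabla u_n\to\nabla u$ at every edge, and the map $v\mapsto\nabla v$ is linear, so $\nabla(u_n-u)=\nabla u_n-\nabla u$. It remains to check that $\{\nabla u_n\}$ is bounded in $L^q(\Omega)$. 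Applying Lemma \ref{BreLieb} on $\Omega$ with exponent $q$ then gives (2).

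\textbf{Main obstacle.} The only delicate point is the boundedness in Step 2. The corollary is stated for a bounded sequence in $l^p(G)$, which I read as $D^{1,q}$-boundedness for the relevant sequence, as it will be used for minimizing sequences.

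If one only has $l^p$-boundedness, it transfers when the degree is uniformly bounded, as on perturbed lattices with maximal degree $D$. Using $|a-b|^q\le 2^{q-1}(|a|^q+|b|^q)$,
\[
\Vert u_n\Vert_{D^{1,q}(G)}^q\le 2^{q-1}D\,\Vert u_n\Vert_{l^q(G)}^q\le 2^{q-1}D\,C^q .
\]
So in either reading the hypothesis of Lemma \ref{BreLieb} holds, and the proof reduces to these two applications of Brezis--Lieb on counting measures.
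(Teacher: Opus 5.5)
Your proposal is correct and follows the paper's proof. Both arguments get pointwise convergence by testing against $\delta_x$, get $l^q$-boundedness from Lemma \ref{normcontrol}, and apply Brezis--Lieb once on vertices and once on an edge measure space, with the gradient bound relying on bounded degree. Two details of yours are slightly cleaner than the paper's: you use the signed difference $v(y)-v(x)$, so that $\nabla(u_n-u)=\nabla u_n-\nabla u$ holds by linearity, and you write the degree dependence explicitly as $2^{q-1}D$. The paper instead applies Brezis--Lieb to $|u_n(x)-u_n(y)|$ and hides the degree dependence in the constant $C$.
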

\begin{proof}
    Since $u_n\rightharpoonup u$ in $l^p(G)$, for any $x\in G$ take delta function 
    \begin{equation*}
\delta_x(y)=\left\{
    \begin{aligned}
        &1,y=x \\
        &0, \mathrm{otherwise}
    \end{aligned}
    \right.    
\end{equation*}
Then the delta function belongs to the dual space of $l^p(G).$ Take delta function as test function we have that $\lim\limits_{n\rightarrow\infty}u_n(x)=u(x)$ for all $x\in G.$ Now for $q\geq p,$ since $\{u_n\}$ is bounded in $l^p(G)$, by Lemma \ref{normcontrol} $\{u_n\}$ is also bounded in $l^q(G)$. Hence by Lemma \ref{BreLieb} (1) is proved.\\

For (2) we consider measure space $(E,\Sigma,\tau)$, where $\Sigma$ is generated by $\{(x,y)\in E\}$ and $\tau((x,y))=1.$ Define $\vert \nabla_G g\vert((x,y)):=\vert g(x)-g(y)\vert$ over the measure space for any function $g$. Then we have
\begin{align*}
    &\Vert \nabla _Gu_n\Vert_{l^q(E)}^q\\
    &=\sum_{(x,y)\in E}\vert u_n(x)-u_n(y)\vert^q\\
    &\leq C\sum_{x\in V}\vert u_n\vert^q\\
    &\leq C\Vert u_n\Vert_{l^p(G)}^q\\
    &\leq C.
\end{align*}
Hence $\{\vert\nabla_G u_n\vert\}$ is bounded in $l^{q}(E).$ Also as the proof in (1), $\lim\limits_{n\rightarrow\infty}u_n(x)=u(x)$ for all $x\in V.$ Hence $\lim\limits_{n\rightarrow\infty}\vert u_n(x)-u_n(y)\vert=\vert u(x)-u(y)\vert$ for all $(x,y)\in E.$ Then by Lemma \ref{BreLieb} we have the desired result.
    
\end{proof}
Next, we recall the discrete Sobolev inequality on lattice graphs; see e.g. \cite{HUA20156162, MR4328634}.
\begin{lemma}
    Suppose $d\geq 3$, $1\leq p< d,$$q\geq\frac{dp}{d-p}$, then there exists $S^{p,q}_{\mathbb{Z}^d}$ such that for all $u\in D^{1,p}(\mathbb{Z}^d)$, we have
    \[
    S^{p,q}_{\mathbb{Z}^d}\Vert u\Vert_{l^q(\mathbb{Z}^d)}\leq \Vert u\Vert_{D^{1,p}(\mathbb{Z}^d)}.
    \]
\end{lemma}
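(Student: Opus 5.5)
The plan is to reduce the inequality to the critical exponent $q^*:=\frac{dp}{d-p}$ and obtain the supercritical range $q>q^*$ by interpolation against $l^\infty$, just as in Lemma \ref{normcontrol}. First I will treat $u\in C_0(\mathbb{Z}^d)$; the statement for general $u\in D^{1,p}(\mathbb{Z}^d)$ then follows by density, provided $\Vert\cdot\Vert_{l^q(\mathbb{Z}^d)}$ passes to limits. For that I note that a $D^{1,p}$-Cauchy sequence of finitely supported functions is $l^{q}$-Cauchy by the inequality itself. Its pointwise limit therefore identifies the completion with a space of functions, and Fatou's lemma carries the inequality over.

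For the critical case $q=q^*$, I will first prove the endpoint $p=1$: $\Vert u\Vert_{l^{d/(d-1)}}\le C\Vert u\Vert_{D^{1,1}}$. I plan to run the Gagliardo--Nirenberg argument coordinate by coordinate. Since $u$ has finite support, for each $x$ and each direction $k$ we have
\[
|u(x)|\le \sum_{t\in\mathbb{Z}}|u(x+(t+1)e_k)-u(x+te_k)|=:g_k(\hat{x}_k),
\]
where $g_k$ depends only on the coordinates other than $x_k$. This gives $|u(x)|^{d/(d-1)}\le \prod_{k=1}^d g_k(\hat{x}_k)^{1/(d-1)}$. The discrete Loomis--Whitney inequality (iterated H\"older over $\mathbb{Z}^{d}$) then yields
\[
\sum_x|u|^{d/(d-1)}\le\prod_k\Big(\sum_{\hat{x}_k}g_k\Big)^{1/(d-1)}\le\Big(\sum_{(x,y)\in E}|u(x)-u(y)|\Big)^{d/(d-1)}.
\]
The right-hand side is comparable to $\Vert u\Vert_{D^{1,1}}^{d/(d-1)}$.

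For general $1<p<d$, I will apply the $p=1$ case to $v=|u|^{\gamma}$ with $\gamma=\frac{p(d-1)}{d-p}$, chosen so that $\gamma\frac{d}{d-1}=q^*$. The discrete chain rule is
\[
\big||a|^\gamma-|b|^\gamma\big|\le\gamma|a-b|\big(|a|^{\gamma-1}+|b|^{\gamma-1}\big).
\]
Combining it with H\"older's inequality on edges, with exponents $p$ and $p'$, and using $(\gamma-1)p'=q^*$, I get
\[
\Vert u\Vert_{q^*}^{\gamma}\le C\Vert u\Vert_{D^{1,p}}\Vert u\Vert_{q^*}^{q^*/p'}.
\]
Dividing by $\Vert u\Vert_{q^*}^{q^*/p'}$ (finite because $u\in C_0$) gives the critical inequality, since $\gamma-q^*/p'=1$. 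The step needing the most care is this edge-wise H\"older step: the sum $\sum_{(x,y)}(|u(x)|^{\gamma-1}+|u(y)|^{\gamma-1})^{p'}$ must be bounded by $C\sum_x|u(x)|^{q^*}$, which holds because each vertex has degree $2d$.

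For $q>q^*$, on $\mathbb{Z}^d$ we have $\Vert u\Vert_{l^q}\le\Vert u\Vert_{l^{q^*}}$ by Lemma \ref{normcontrol} (equivalently $\Vert u\Vert_\infty\le\Vert u\Vert_{q^*}$), so the supercritical inequality follows from the critical one with the same constant.
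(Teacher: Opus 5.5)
Your proof is correct. It differs from the paper only in that the paper gives no proof of this lemma: it is simply recalled from \cite{HUA20156162, MR4328634}.

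You supply a self-contained argument in three steps. The case $p=1$ follows by telescoping along coordinate lines and applying the discrete Loomis--Whitney inequality; this gives constant $1$, since $\sum|u(x)-u(y)|$ over undirected edges (each counted once) is exactly $\Vert u\Vert_{D^{1,1}(\mathbb{Z}^d)}$ in the paper's normalization. The range $1<p<d$ follows from the substitution $v=|u|^{\gamma}$. The range $q>q^*$ follows from Lemma \ref{normcontrol}.

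The exponent bookkeeping checks out: $\gamma\frac{d}{d-1}=q^*$, $(\gamma-1)p'=q^*$ and $\gamma-\frac{q^*}{p'}=1$. The degree bound $\sum_{(x,y)\in E}(|u(x)|^{\gamma-1}+|u(y)|^{\gamma-1})^{p'}\le 2^{p'-1}\cdot 2d\,\Vert u\Vert_{l^{q^*}}^{q^*}$ also holds.

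Your version buys an explicit constant. It also makes explicit the identification of $D^{1,p}(\mathbb{Z}^d)$ with a space of functions, which the paper uses tacitly (for instance in Lemma \ref{sobolev} and whenever it takes pointwise limits). To finish that identification, add a one-line injectivity check. If a $D^{1,p}$-Cauchy sequence in $C_0(\mathbb{Z}^d)$ tends to $0$ in $l^q$, its edge differences tend to $0$ pointwise and are Cauchy in $l^p(E)$. Hence they tend to $0$ in $l^p(E)$, i.e.\ the sequence is null in $D^{1,p}$.

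The citation is shorter. Moreover, the isoperimetric-inequality route that is usual for graphs works on any bounded-degree graph satisfying a $d$-dimensional isoperimetric inequality. Your Loomis--Whitney step, by contrast, uses the product structure of $\mathbb{Z}^d$, so for the perturbed graphs one must still compare with $\mathbb{Z}^d$, as the paper does in Lemma \ref{sobolev}.
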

In this paper, we consider two types of graphs $G=(V,\tilde{E})$ obtained by perturbations of the lattice graph $\mathbb{Z}^d=(V, E)$. The first one is $G=(V,E\backslash E^{'})$ which is obtained by deleting edges $E^{'}\subset E$ such that $x,y\in B_R$ for all $(x,y)\in E^{'}$ and some $R>0$ while maintaining $G$ connected. Another is $G=(V,E\cup E^{'})$ which is obtained by adding edges $E^{'}$ such that $x,y\in B_R$ for all $(x,y)\in E^{'}$ and some $R>0$. Since we do not change the node set $V$, we abuse $x\in\mathbb{Z}^d$ to denote $x\in V$ for clarity. We prove the discrete Sobolev inequality for $G$ as follows.
\begin{lemma}\label{sobolev}
    Let $G=(V,\tilde{E})$ be a graph perturbed from the lattice graph $\mathbb{Z}^d$ as above, where $d\geq 3$, $1\leq p< d$, $q\geq\frac{dp}{d-p}$, $u\in D^{1,p}(G)$. Then there exists a constant $S^{p,q}_G$ such that
    \[
    S^{p,q}_G\Vert u\Vert_{l^q(G)}\leq\Vert u\Vert_{D^{1,p}(G)}.
    \]
\end{lemma}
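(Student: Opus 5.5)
We need to prove a Sobolev inequality on $G=(V,\tilde E)$ for all $u\in D^{1,p}(G)$. Since $D^{1,p}(G)$ is the completion of $C_0(G)$, we first prove the inequality for $u\in C_0(G)$ and then extend it by density. Once the inequality holds on $C_0(G)$, a Cauchy sequence in $D^{1,p}(G)$ is Cauchy in $l^q(G)$. This identifies elements of the completion with $l^q$ functions, and the inequality passes to the limit.

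\textbf{Adding edges.} If $\tilde E=E\cup E'$, the comparison is immediate. For every $u$,
\[
\int_{\mathbb{Z}^d}|\nabla_{\mathbb{Z}^d}u|_p^p\,dx\le\int_G|\nabla_G u|_p^p\,dx,
\]
because the $G$-energy is the lattice energy plus the nonnegative contributions of the edges in $E'$. Hence $S^{p,q}_{\mathbb{Z}^d}\Vert u\Vert_{l^q}\le \Vert u\Vert_{D^{1,p}(\mathbb{Z}^d)}\le\Vert u\Vert_{D^{1,p}(G)}$, so we may take $S^{p,q}_G=S^{p,q}_{\mathbb{Z}^d}$.

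\textbf{Deleting edges.} Now $\tilde E=E\setminus E'$ and $G$ is connected. For $u\in C_0(G)$, write
\[
\Vert u\Vert_{D^{1,p}(\mathbb{Z}^d)}^p=\Vert u\Vert_{D^{1,p}(G)}^p+\sum_{(x,y)\in E'}|u(x)-u(y)|^p,
\]
up to the factor conventions in the definition of $|\nabla u|_p$. The task is to bound each deleted-edge term by a constant times $\Vert u\Vert_{D^{1,p}(G)}^p$.

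Fix $(x,y)\in E'$. Since $G$ is connected, there is a path $x=z_0\sim z_1\sim\dots\sim z_m=y$ in $G$, and we take it of minimal length $m=m(x,y)$. By the triangle inequality and H\"older (convexity of $t\mapsto t^p$),
\[
|u(x)-u(y)|^p\le m^{p-1}\sum_{i=1}^m|u(z_i)-u(z_{i-1})|^p\le 2\,m^{p-1}\Vert u\Vert_{D^{1,p}(G)}^p.
\]
Every edge of this path lies in $\tilde E$, so the right-hand sum is a part of the $G$-energy.

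Summing over the finitely many edges of $E'$, with $M=\max_{E'}m(x,y)<\infty$, gives
\[
\Vert u\Vert_{D^{1,p}(\mathbb{Z}^d)}^p\le\bigl(1+2|E'|M^{p-1}\bigr)\Vert u\Vert_{D^{1,p}(G)}^p=:C\,\Vert u\Vert_{D^{1,p}(G)}^p.
\]
Combining with the lattice Sobolev inequality yields
\[
S^{p,q}_{\mathbb{Z}^d}C^{-1/p}\Vert u\Vert_{l^q}\le\Vert u\Vert_{D^{1,p}(G)},
\]
so $S^{p,q}_G=S^{p,q}_{\mathbb{Z}^d}C^{-1/p}$ works.

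The main point needing care is the density argument. Because the two norms are equivalent on $C_0$, $D^{1,p}(G)$ and $D^{1,p}(\mathbb{Z}^d)$ coincide as completions. The lattice Sobolev inequality holds on $D^{1,p}(\mathbb{Z}^d)$, so the extension is automatic. Pointwise convergence of the approximating sequence follows from the $l^q$ convergence.
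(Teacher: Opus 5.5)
Your proposal is correct and follows the paper's proof: compare $\Vert u\Vert_{D^{1,p}(\mathbb{Z}^d)}$ with $\Vert u\Vert_{D^{1,p}(G)}$ (trivially when edges are added; when edges are deleted, by bounding each deleted-edge difference along a path in $G$ with H\"older), then apply the lattice Sobolev inequality. Your explicit constants and density argument, showing that $D^{1,p}(G)$ and $D^{1,p}(\mathbb{Z}^d)$ coincide as completions, fill in details the paper leaves implicit.
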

\begin{proof}
    We have 
    \[
    \Vert u\Vert_{l^q(G)}=\Vert u\Vert_{l^q(\mathbb{Z}^d)}\leq C(p,q)\Vert u\Vert_{D^{1,p}(\mathbb{Z}^d)}.
    \]
    If $G$ is obtained by adding edges within $B_R$, then apparently we have 
    \[
    \Vert u\Vert_{D^{1,p}(\mathbb{Z}^d)}\leq \Vert u\Vert_{D^{1,p}(G)}.
    \]
    Which yields the result. If $G$ is obtained by deleting edges $E^{'}$ within $B_R$, but keeping connectivity. Given $(x,y)\in E^{'},$ suppose a path $(x_0=x,x_1,...,x_k=y)$ such that $x_{i}\sim x_{i+1}$ in $G.$ Then we have
    \begin{align*}
        \vert u(x)-u(y)\vert^p\leq C(G,p)\sum_{i=0}^{k-1}\vert u(x_i)-u(x_{i+1})\vert^p\leq C(G,p)\Vert u\Vert_{D^{1,p}(G)}^p .
    \end{align*}
    This proves the result.
\end{proof}
\section{Perturbation by deleting edges}\label{sec:delete}
In this section, we suppose $G=(V, E\backslash E^{'})$, which is a graph obtained by deleting edges $E^{'}$ in lattice graphs $\mathbb{Z}^d=(V, E)$ within the ball $B_R$ while keeping the connectivity. When we consider the attainability of $E^{a,p}_G$, we always assume $p>2$. When we consider the attainability of $J^{a,p,q}_G$, we assume $d\geq 3$ further. First, before the proof of (1) in Theorem \ref{Thm 1}, we recall the existence of a threshold for the attainability of $E^{a,p}_{\mathbb{Z}^d}$; see e.g. \cite{MR1690199,guan2025existencenormalizedsolutionsnonlinear}.
\begin{lemma}\label{lemma:thresholdlattice}
    (1)Suppose $d\geq 1$ and $p>2$. Then there exists threshold $\alpha_{\mathbb{Z}^d}\in[0,+\infty)$ such that if $0<a<\alpha_{\mathbb{Z}^d}$, then $E^{a,p}_{\mathbb{Z}^d}$ can not be attained, if $a>\alpha_{\mathbb{Z}^d}$, then $E^{a,p}_{\mathbb{Z}^d}$ can be attained.\\ 

    (2) If $E^{a,p}_{\mathbb{Z}^d}<0$, then $E^{a,p}_{\mathbb{Z}^d}<0$ can be attained.
\end{lemma}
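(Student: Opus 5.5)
The plan is to run the classical argument of Weinstein on $\mathbb{Z}^d$. It rests on three facts: $E^{a,p}_{\mathbb{Z}^d}\le 0$ for every $a>0$, a scaling inequality, and concentration compactness based on translation invariance. Throughout, write $E(a):=E^{a,p}_{\mathbb{Z}^d}$ and $\Phi:=\Phi^p_{\mathbb{Z}^d}$.

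\emph{Step 1: $-\infty<E(a)\le 0$.} The lower bound is the lemma above. For the upper bound, take a slowly varying bump $\varphi_R(x)=\varphi(x/R)$ with $\varphi\in C_c^\infty(\mathbb{R}^d)$, and normalize it to $u_R\in S^a_{\mathbb{Z}^d}$. Then $\int|\nabla u_R|_2^2\lesssim aR^{-2}$ and $\int|u_R|^p\lesssim a^{p/2}R^{-d(p-2)/2}$, so $\Phi(u_R)\to 0$ as $R\to\infty$.

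\emph{Step 2: scaling.} For $u\in S^a$ and $\theta>1$ we have $\sqrt\theta u\in S^{\theta a}$ and
\[
\Phi(\sqrt\theta u)=\theta\Phi(u)-\frac{\theta^{p/2}-\theta}{p}\int|u|^p<\theta\Phi(u).
\]
Hence $E(\theta a)\le\theta E(a)\le E(a)$, so $E$ is nonincreasing. Moreover, $E(\theta a)<\theta E(a)$ whenever $E(a)<0$; this uses minimizing sequences whose $l^p$ norm stays away from $0$, which holds because $\Phi\ge -\frac1p\int|u|^p$. The standard consequence is strict subadditivity: $E(a)<E(b)+E(a-b)$ for $0<b<a$ whenever $E(a)<0$. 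Also $\Phi(\sqrt a\,\delta_0)=da-a^{p/2}/p<0$ for large $a$. Therefore
\[
\alpha_{\mathbb{Z}^d}:=\inf\{a>0:E(a)<0\}
\]
lies in $[0,\infty)$. By monotonicity, $E(a)<0$ for every $a>\alpha_{\mathbb{Z}^d}$ and $E(a)=0$ for every $a<\alpha_{\mathbb{Z}^d}$.

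\emph{Step 3: proof of (2), the main step.} Let $E(a)<0$ and let $\{u_n\}\subset S^a$ be a minimizing sequence. Vanishing is excluded as follows. If $\Vert u_n\Vert_\infty\to0$, then $\int|u_n|^p\le\Vert u_n\Vert_\infty^{p-2}a\to0$, so $\liminf\Phi(u_n)\ge0$, a contradiction. Pick $x_n$ with $|u_n(x_n)|\ge c>0$ and translate. Translation invariance keeps $\{u_n(\cdot+x_n)\}$ minimizing, and it now has a weak limit $u\ne0$ in $l^2$ with pointwise convergence. Set $b=\Vert u\Vert_2^2\in(0,a]$. Corollary \ref{NormBreLieb} splits the mass, the $l^p$ norm and the Dirichlet energy ($D^{1,2}$) additively, giving
\[
E(a)=\lim\Phi(u_n)\ge \Phi(u)+\liminf\Phi(u_n-u)\ge E(b)+E(a-b)
\]
if $b<a$, which contradicts strict subadditivity. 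So $b=a$, $u_n\to u$ strongly in $l^2$, and since $\Phi$ is continuous on $l^2$ (on $\mathbb{Z}^d$ the gradient term is bounded by $C\Vert\cdot\Vert_2^2$, and the $l^p$ term is controlled by Lemma \ref{normcontrol}), $u$ is a minimizer. The delicate points here are the rigorous strict-subadditivity step and making sure $E(a-b)$ is handled correctly when $a-b$ is small.

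\emph{Step 4: proof of (1).} For $a>\alpha_{\mathbb{Z}^d}$ we have $E(a)<0$, so $E(a)$ is attained by (2). For $0<a<\alpha_{\mathbb{Z}^d}$ we have $E(a)=0$. Suppose a minimizer $u\in S^a$ existed, so $\Phi(u)=0$. Choose $\theta>1$ with $\theta a<\alpha_{\mathbb{Z}^d}$. Step 2 then gives
\[
E(\theta a)\le\Phi(\sqrt\theta u)<\theta\Phi(u)=0,
\]
which contradicts $E(\theta a)=0$. Hence $E(a)$ is not attained for $0<a<\alpha_{\mathbb{Z}^d}$.
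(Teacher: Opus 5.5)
Your proof is correct, and it is the standard Weinstein concentration-compactness argument; the paper does not prove this lemma itself but cites it from \cite{MR1690199,guan2025existencenormalizedsolutionsnonlinear}, and its own arguments for $G$ in Section~\ref{sec:delete} follow the same outline (threshold $\inf\{a:E(a)<0\}$, Brezis--Lieb splitting plus strict subadditivity against dichotomy). The two points you flag close routinely: strict subadditivity follows by applying $E(\theta s)<\theta E(s)$ to the larger of $b,a-b$ when its energy is negative, and is trivial otherwise because both energies then vanish while $E(a)<0$; the $o(1)$ mass defect of $u_n-u$ is absorbed by continuity of $a\mapsto E(a)$.
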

Now we need to establish the property of the function $a\mapsto E_G^{a,p}$. The proof of the following lemma follows the same verbatim as \cite[Lemma 4.2]{guan2025existencenormalizedsolutionsnonlinear}; see also \cite{MR4390628}. So we omit the proof here.
\begin{lemma}\label{funproper}
     (1) $E^{a,p}_G\leq 0$ for all $a>0$.\\
     (2) $E^{a+b,p}_G\leq E^{a,p}_G+E^{b,p}_G$ for $a,b\textgreater0$.\\
     (3) If $E^{a,p}_G$ or $E^{b,p}_G$ is attained, then $E^{a+b,p}_G\textless E^{a,p}_G+E^{b,p}_G$.\\
     (4) Function $a\mapsto E^{a,p}_G$ is continuous and non-increasing.
\end{lemma}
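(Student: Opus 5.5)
We prove the four items in order, working with $\Phi^p_G$ on $S^a_G$ and the scaling $u\mapsto \theta u$ with $\theta>1$.

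\emph{Item (1).} Fix a vertex $x_0$ and take $u_t=\sqrt{a}\,t^{d/2}e^{-t|x-x_0|_1}$, normalized in $l^2(G)$.
On $\mathbb{Z}^d$, a spread-out profile such as $\varphi(x)=c\,R^{-d/2}\psi(x/R)$ with $R\to\infty$ is simpler. Then
\[
\int_{\mathbb{Z}^d}|\nabla\varphi|^2\sim R^{-2}\to0,\qquad \int|\varphi|^p\sim R^{-d(p-2)/2}\to 0.
\]
Since $G$ has fewer edges than $\mathbb{Z}^d$, we have $\int_G|\nabla_G u|_2^2\le\int_{\mathbb{Z}^d}|\nabla u|_2^2$. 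Hence $\Phi^p_G(\varphi_R)\le \frac12\int_{\mathbb{Z}^d}|\nabla\varphi_R|^2\to0$, and together with $\Phi^p_G(u)\le\frac12\int|\nabla u|^2$ this gives $E^{a,p}_G\le 0$.

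\emph{Item (2).} For $\theta>1$ and $u\in S^a_G$ we have $\theta u\in S^{\theta^2 a}_G$ and
\[
\Phi^p_G(\theta u)=\theta^2\Big(\tfrac12\int|\nabla u|^2-\tfrac{\theta^{p-2}}{p}\int|u|^p\Big)\le \theta^2\Phi^p_G(u),
\]
which yields $E^{\theta^2a}_G\le\theta^2E^a_G$. Set $\theta^2=(a+b)/a$ with $a\ge b$, so that $E^{a+b}\le\frac{a+b}{a}E^a$. Applying the same argument to $b$ gives $E^{a}\le \frac ab E^b$. Since $E\le0$, we get
\[
E^{a+b}\le E^a+\tfrac{b}{a}E^a\le E^a+E^b.
\]

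\emph{Item (3).} Suppose $u$ attains $E^a_G$. Then $u\neq0$ forces $\int|u|^p>0$, so the scaling inequality in (2) is strict: $E^{\theta^2a}<\theta^2E^a$ whenever $E^a$ is attained. We then rerun the chain in (2) with strict inequality in whichever step uses the attained level. The delicate case is when $E^a=0$ is attained. There the strict scaling gives $E^{a+b}<0=E^a+E^b$ once we also know $E^b\ge$ the relevant bound; combined with $E\le 0$ this yields the claim.

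\emph{Item (4).} Monotonicity follows from (1) and (2): $E^{a+b}\le E^a+E^b\le E^a$.

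For continuity, take $a_n\to a$ and $u\in S^a_G$ nearly optimal. Then $\sqrt{a_n/a}\,u\in S^{a_n}_G$, and its energy converges to $\Phi^p_G(u)$, giving upper semicontinuity. For the reverse direction, scale near-minimizers of $a_n$ back to $S^a_G$. Their gradient and $l^p$ norms are uniformly bounded, since $\int|\nabla u|^2\le 4d\|u\|_2^2$ and the $l^p$ bound follows from Lemma~\ref{normcontrol}, so the error is $o(1)$.

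The main obstacle is the strictness in (3) when the attained level is zero.
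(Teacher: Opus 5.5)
Your proposal is correct and is essentially the argument the paper defers to (and mirrors in its own proof of Lemma~\ref{sobolevfunproperty}): the scaling bound $E^{\theta^2 a}_G\le\theta^2E^{a}_G$ for $\theta>1$, strict when $E^a_G$ is attained, is fed into the chain $E^{a+b}_G\le\frac{a+b}{a}E^a_G\le E^a_G+E^b_G$; (1) uses a spreading profile, and continuity uses rescaled near-minimizers. The case $E^a_G=0$ that you flag as the main obstacle needs no separate treatment. The strict inequality $E^{\theta^2a}_G<\theta^2E^a_G$ holds whatever the sign of $E^a_G$, and in your chain the attained level is always the source of one scaling step, so your general argument already covers it. In particular, no claim like $E^a_G+E^b_G=0$ is needed, and that claim is false when $b>a$.
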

We also need the following useful lemmas.
\begin{lemma}\label{lemma:normalizedless}
    (1) $E^{a,p}_G\leq E^{a,p}_{\mathbb{Z}^d}$.\\
    (2) If $E^{a,p}_{\mathbb{Z}^d}$ is attained, then $E^{a,p}_G\textless E^{a,p}_{\mathbb{Z}^d}$.\\
    
\end{lemma}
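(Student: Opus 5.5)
Part (1) follows by comparing the energies of a single function on the two graphs. Since $G$ and $\mathbb{Z}^d$ share the vertex set $V$, the spheres $S^a_G$ and $S^a_{\mathbb{Z}^d}$ coincide, and the potential term $\frac1p\int|u|^p\,dx$ is the same on both. The only difference is in the Dirichlet term. Deleting edges can only remove nonnegative contributions, so
\[
\int_G|\nabla_G u|_2^2\,dx=\int_{\mathbb{Z}^d}|\nabla_{\mathbb{Z}^d}u|_2^2\,dx-\sum_{(x,y)\in E'}|u(x)-u(y)|^2\le \int_{\mathbb{Z}^d}|\nabla_{\mathbb{Z}^d}u|_2^2\,dx .
\]
This is exact because the factor $\frac12$ in $|\nabla u|_2^2$ counts each edge once. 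Hence $\Phi^p_G(u)\le \Phi^p_{\mathbb{Z}^d}(u)$ for every $u\in S^a_G$. Taking the infimum over the common sphere gives $E^{a,p}_G\le E^{a,p}_{\mathbb{Z}^d}$.

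For part (2), let $u$ be a minimizer of $E^{a,p}_{\mathbb{Z}^d}$. By the identity above,
\[
E^{a,p}_G\le \Phi^p_G(u)=E^{a,p}_{\mathbb{Z}^d}-\frac12\sum_{(x,y)\in E'}|u(x)-u(y)|^2 .
\]
So it suffices to find one edge $(x,y)\in E'$ with $u(x)\ne u(y)$, assuming $E'\neq\emptyset$. If instead $u$ agrees at the endpoints of every deleted edge, I would perturb $u$ rather than argue about the structure of $u$. The idea is to choose a deleted edge $(x_0,y_0)$ and a small variation $u_t=\sqrt a\,(u+t\varphi)/\|u+t\varphi\|_{l^2}$, with $\varphi$ finitely supported and $\varphi(x_0)\ne\varphi(y_0)$. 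The first-order term of $t\mapsto\Phi^p_{\mathbb{Z}^d}(u_t)$ vanishes because $u$ is a constrained critical point on $\mathbb{Z}^d$. On $G$ the same expansion gains the extra term $-\frac{t^2}{2}|\varphi(x_0)-\varphi(y_0)|^2+O(t^3)$ from the missing edge. However, the second variation on $\mathbb{Z}^d$ is also of order $t^2$, so this comparison alone does not settle the question.

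The cleaner route is to show directly that equality at all deleted edges is impossible, and this is the step I expect to be the main obstacle. The minimizer satisfies the Euler--Lagrange equation $-\Delta_{\mathbb{Z}^d}u+\lambda u=|u|^{p-2}u$. Replacing $u$ by $|u|$ does not increase the energy, so we may take $u\ge0$; the strong maximum principle applied to $-\Delta u+(\lambda+\text{const})u\ge0$ then gives $u>0$. The plan is then to use the Euler--Lagrange equation on $G$ as well. If $u(x)=u(y)$ for all $(x,y)\in E'$, then $\Delta_G u=\Delta_{\mathbb{Z}^d}u$ pointwise, so $u$ also satisfies the equation on $G$ and attains $\Phi^p_G(u)=E^{a,p}_{\mathbb{Z}^d}$. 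Either $E^{a,p}_G<E^{a,p}_{\mathbb{Z}^d}$ already holds, or $u$ is a minimizer for $G$ as well.

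In the latter case I would obtain a contradiction from a symmetry or monotonicity property of lattice ground states. Minimizers on $\mathbb{Z}^d$ can be rearranged so that they are strictly decreasing in each coordinate direction away from a center, via discrete Steiner-type symmetrization or an argument from the equation. Two lattice neighbours differing in a single coordinate then have different values unless they are mirror images about the center in that coordinate. Translating $u$ (translation invariance on $\mathbb{Z}^d$) moves the center so that a chosen deleted edge is not such a symmetric pair, giving $u(x_0)\ne u(y_0)$ and the strict inequality.

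If the symmetry property is unavailable, I would fall back on the second-variation computation, choosing $\varphi=\delta_{x_0}$ and translating $u$ far so that its local structure near $E'$ can be chosen favorably. The crux in either route is ruling out $u$ being constant across every deleted edge, for some translate of $u$.
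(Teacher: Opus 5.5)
Part (1), and your reduction in part (2) to finding a minimizer of $E^{a,p}_{\mathbb{Z}^d}$ that jumps across some deleted edge, are correct. The gap is in how you produce that jump. Your main route assumes that lattice minimizers can be rearranged to be \emph{strictly} decreasing in each coordinate away from a center. That is not established here. Discrete Steiner symmetrization for the lattice Dirichlet form is itself delicate for $d\ge 2$. Even when it works, it only gives non-strict monotonicity, so it cannot rule out $u(x_0)=u(y_0)$ on the deleted edge. Your second-variation fallback is inconclusive, as you note yourself. Translating $u$ far from $E'$ actually makes the jump across $E'$ smaller. So as written, the strict inequality in (2) is not proved.

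The missing idea is elementary and needs no Euler--Lagrange equation, positivity, or symmetry. After relabeling coordinates, take $(z,z+e_1)\in E'$. Since $u\in l^2(\mathbb{Z}^d)$ and $\|u\|_{l^2}^2=a>0$, there is some $y$ with $u(y)\neq u(y+e_1)$. Otherwise $u(x)=u(x+ke_1)$ for all $x$ and all $k$, and letting $k\to\infty$ forces $u\equiv 0$. By translation invariance on $\mathbb{Z}^d$, $\tilde u(x)=u(x-z+y)$ is again a minimizer of $E^{a,p}_{\mathbb{Z}^d}$, and $\tilde u(z)-\tilde u(z+e_1)=u(y)-u(y+e_1)\neq 0$. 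Hence $E^{a,p}_G\le \Phi^p_G(\tilde u)\le E^{a,p}_{\mathbb{Z}^d}-\frac12|u(y)-u(y+e_1)|^2<E^{a,p}_{\mathbb{Z}^d}$. This is exactly the paper's argument. You already had the translation step; what you missed is that any pair with a nonzero jump in the right direction will do, and such a pair always exists for a nonzero $l^2$ function.
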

\begin{proof}
    (1) This follows from $\Phi^p_G(u)=\Phi_{\mathbb{Z}^d}^p(u)-\frac{1}{2}\sum\limits_{(x,y)\in E^{'}}\vert u(x)-u(y)\vert^2$ for any $u\in S^a_{G}$.\\
    (2) Suppose $E^{a,p}_{\mathbb{Z}^d}$ is attained by $u\in S^a_{\mathbb{Z}^d}$. Let $(z,w)\in E^{'}$, without loss of generality, we assume $w= z+e_1$. We claim that there exists $y\in \mathbb{Z}^d$ such that $u(y)\neq u(y+e_1)$. If this is not true, for any $x\in\mathbb{Z}^d$ and any $k\in \mathbb{Z}$, $u(x)=u(x+ke_1)$. Since $\Vert u\Vert_{l^2(\mathbb{Z}^d)}\textless\infty$, letting $k\rightarrow \infty$, we have $u(x+ke_1)\rightarrow0$. Hence $u(x)=0$ for any $x\in\mathbb{Z}^d$. But this is in contradiction to $\Vert u\Vert_{l^2(\mathbb{Z}^d)}^2=a$. Set $\tilde{u}(x)= u(x-z+y)$, then $\tilde{u}(z)=u(y)$ and we have $\vert\tilde{u}(z)-\tilde{u}(z+e_1)\vert\neq 0$. Hence we have
    \begin{align*}
    E^{a,p}_G&\leq\Phi_G^p(\tilde{u})\\
    &=\Phi_{\mathbb{Z}^d}^p(\tilde{u})-\frac{1}{2}\vert\tilde{u}(z)-\tilde{u}(z+e_1)\vert^2    \\
    &=E^{a,p}_{\mathbb{Z}^d}-\frac{1}{2}\vert\tilde{u}(z)-\tilde{u}(z+e_1)\vert^2\\
    &\textless E^{a,p}_{\mathbb{Z}^d}.
    \end{align*}
\end{proof}
\begin{lemma}\label{lemma:normalizedcompare}
    If $E^{a,p}_G\textless E^{a,p}_{\mathbb{Z}^d}$, then $E^{a,p}_{G}$ is attained.
\end{lemma}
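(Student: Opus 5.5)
Take a minimizing sequence $\{u_n\}\subset S^a_G$ with $\Phi^p_G(u_n)\to E^{a,p}_G$. It is bounded in $l^2(G)$, so after passing to a subsequence $u_n\rightharpoonup u$ in $l^2(G)$ and $u_n(x)\to u(x)$ pointwise. The plan is to compare with the lattice: since the perturbation is confined to $B_R$, mass that runs off to infinity sees only the lattice energy, so it costs at least $E_{\mathbb{Z}^d}$ of its mass. The strict inequality $E^{a,p}_G<E^{a,p}_{\mathbb{Z}^d}$ then forces the mass to stay, and strict subadditivity excludes dichotomy.

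\emph{Step 1 (splitting).} Write $v_n=u_n-u$ and set $b=\Vert u\Vert_{l^2(G)}^2\in[0,a]$. By Corollary \ref{NormBreLieb}, applied to the $l^2$ norm, the $l^p$ norm and the gradient term on the edge set of $G$, we get
\[
\Phi^p_G(u_n)=\Phi^p_G(u)+\Phi^p_G(v_n)+o(1),\qquad \Vert v_n\Vert_{l^2(G)}^2\to a-b .
\]
Since $v_n\to0$ pointwise and $E^{'}$ is finite inside $B_R$, we also have
\[
\sum_{(x,y)\in E^{'}}\vert v_n(x)-v_n(y)\vert^2\to0,
\]
hence $\Phi^p_G(v_n)=\Phi^p_{\mathbb{Z}^d}(v_n)+o(1)$, using the identity from the proof of Lemma \ref{lemma:normalizedless}(1).

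\emph{Step 2 (comparison).} Rescale $\tilde v_n=\sqrt{(a-b)}\,v_n/\Vert v_n\Vert_{l^2}$ when $b<a$. Using continuity of $a\mapsto E^{a,p}_{\mathbb{Z}^d}$, which holds by the analogue of Lemma \ref{funproper}(4) on $\mathbb{Z}^d$, we get $\liminf\Phi^p_{\mathbb{Z}^d}(v_n)\ge E^{a-b,p}_{\mathbb{Z}^d}$. The rescaling error is $o(1)$ because $\Phi$ is locally Lipschitz on bounded sets of $l^2$, since $\Vert\cdot\Vert_{l^p}\le\Vert\cdot\Vert_{l^2}$. Similarly $\Phi^p_G(u)\ge E^{b,p}_G$. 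This gives
\[
E^{a,p}_G\ge E^{b,p}_G+E^{a-b,p}_{\mathbb{Z}^d},
\]
with the conventions $E^{0}=0$ and $\Phi(0)=0$.

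\emph{Step 3 (case analysis).} If $b=0$, then $E^{a,p}_G\ge E^{a,p}_{\mathbb{Z}^d}$, contradicting the hypothesis. Now suppose $0<b<a$. By Lemma \ref{lemma:normalizedless}(1) applied at mass $a-b$ and Lemma \ref{funproper}(2),
\[
E^{a,p}_G\ge E^{b,p}_G+E^{a-b,p}_{\mathbb{Z}^d}\ge E^{b,p}_G+E^{a-b,p}_G\ge E^{a,p}_G .
\]
So all inequalities are equalities, and $\Phi^p_G(u)=E^{b,p}_G$, i.e.\ $E^{b,p}_G$ is attained by $u$. Lemma \ref{funproper}(3) then gives the strict inequality $E^{a,p}_G<E^{b,p}_G+E^{a-b,p}_G$, a contradiction. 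Hence $b=a$, so $u_n\to u$ strongly in $l^2$, and therefore also in $l^p$ and in the gradient term. Thus $u\in S^a_G$ attains $E^{a,p}_G$.

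The main obstacle is Step 2. One must make sure the lower bound for the escaping part uses the lattice energy, which is exactly where finiteness of $E^{'}$ and pointwise vanishing of $v_n$ enter, and that the constraint mismatch from $\Vert v_n\Vert^2\ne a-b$ is handled by scaling plus continuity. The step where $u$ is shown to be a minimizer at mass $b$ requires tracking the equality chain carefully. Alternatively, one can avoid it by using the standard scaling argument $E^{\theta b}<\theta E^{b}$ behind Lemma \ref{funproper}(3) directly.
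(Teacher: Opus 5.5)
Your proposal is correct and follows essentially the same route as the paper. You take a weakly convergent minimizing sequence, split it by Brezis--Lieb, exclude vanishing by comparing with $E^{a,p}_{\mathbb{Z}^d}$ using pointwise decay on the finitely many deleted edges, and exclude dichotomy via the strict subadditivity in Lemma \ref{funproper}(3). The differences are only organizational. You merge vanishing and dichotomy into the single inequality $E^{a,p}_G\ge E^{b,p}_G+E^{a-b,p}_{\mathbb{Z}^d}$, whereas the paper treats $u=0$ separately and splits the dichotomy case by whether $u$ attains $E^{\beta,p}_G$. You also handle the mass mismatch $\Vert u_n-u\Vert_{l^2(G)}^2=a-b+o(1)$ explicitly by rescaling, which the paper leaves implicit.
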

\begin{proof}
    Let $\{u_n\}\subset S^a_G$ be a minimizing sequence, taking a subsequence if necessary, we assume $u_n\rightharpoonup u$ in $l^2(G)$, we claim that $\Vert u\Vert_{l^2(G)}^2= a$. \\
    
    If the claim is not true. If $u=0$, then for any $x\in V$, $\lim\limits_{n\rightarrow\infty}u_n(x)=0$. For any $\epsilon\textgreater0$, take a sufficiently large $n$ such that for all $(x,y)\in E^{'}$, we have $\vert u(x)\vert \leq \epsilon$ and $\vert u(y)\vert\leq\epsilon$. Also we take a sufficiently large $n$ such that $\Phi_G^p(u_n)\leq E^{a,p}_G+\epsilon$. On the one hand, we have 
    \begin{align*}
        \Phi_{\mathbb{Z}^d}^p(u_n)&=\Phi_G^p(u_n)+\frac{1}{2}\sum\limits_{(x,y)\in \mathcal{E}^{'}}\vert u_n(x)-u_n(y)\vert^2\\
        &\leq E^{a,p}_G+ C\epsilon.\\
    \end{align*}
    On the other hand we have $E^{a,p}_{\mathbb{Z}^d}\leq \Phi^p_{\mathbb{Z}^d}(u_n)$, take $\epsilon$ sufficiently small will lead to a contradiction to $E^{a,p}_G\textless E^{a,p}_{\mathbb{Z}^d}$.\\
    
    If $0\textless\Vert u\Vert_{l^2(G)}^2\textless a$, set $\beta = \Vert u\Vert_{l^2(G)}^2$, then by Corollary \ref{NormBreLieb} we have $\Vert u_n-u\Vert_{l^2(G)}^2 = a-\beta+o(1)$, $ \Vert \nabla_G u_n\Vert_{l^2(G)}^2-\Vert \nabla_G (u_n-u)\Vert_{l^2(G)}^2= \Vert \nabla_G  u\Vert_{l^2(G)}^2+o(1)$, $\Vert u_n\Vert_{l^p(G)}^p-\Vert u_n-u\Vert_{l^p(G)}^p=\Vert  u\Vert_{l^p(G)}^p+o(1)$. If $E^{\beta,p}_G$ is not attained by $u$, then we have 
    \begin{align*}
        E^{\beta,p}_{G}&< \Phi^p_G(u)\\
        &=\Phi^p_G(u_n)-\Phi^p_G(u_n-u)+o(1)\\
        &\leq E^{a,p}_{G} - E^{a-\beta,p}_{G}+o(1).
    \end{align*}
    Taking $n\rightarrow\infty$ we have $E^{\beta,p}_G<E^{a,p}_{G} - E^{a-\beta,p}_{G}.$ But this is in contradiction to $E^{\beta,p}_G+ E^{a-\beta,p}_G\geq E^{a,p}_G$. If $E^{\beta,p}_G$ is attained by $u$, then with similar argument we have
    \[
    E^{\beta,p}_{G}\leq  E^{a,p}_{G} - E^{a-\beta,p}_{G}.
    \]
    Since $E^{\beta,p}_G$ is attained, we have $E^{\beta,p}_G+ E^{a-\beta,p}_G> E^{a,p}_G$ by (3) in Lemma \ref{funproper}. This is a contradiction.\\
    
    In conclusion, $\Vert u\Vert_{l^2(G)}^2=a$. By Lemma \ref{NormBreLieb} we have $u_n\rightarrow u$ in $l^2(G)$ as $n\rightarrow\infty$. Hence by $\Vert\nabla_G(u_n-u)\Vert_{l^2(G)}^2\leq C\Vert u_n-u\Vert_{l^2(G)}^2$ and $\Vert u_n-u\Vert_{l^p(G)}^p\leq \Vert u_n-u\Vert_{l^2(G)}^p$, we have $\lim\limits_{n\rightarrow\infty}\Vert \nabla u_n\Vert_{l^2(G)}^2=\Vert \nabla u\Vert_{l^2(G)}^2$ and  $\lim\limits_{n\rightarrow\infty}\Vert  u_n\Vert_{l^p(G)}^p=\Vert u\Vert_{l^p(G)}^p$ by Corollary \ref{NormBreLieb}. Hence $\Phi_G^p(u)= \lim\limits_{n\rightarrow\infty}\Phi^p_{G}(u_n)=E^{a,p}_G$.
\end{proof}
Now we can find the threshold for the problem.
\begin{theorem}
    Let $\alpha_G=\inf\{a\in(0,\infty):E^{a,p}_G< 0\}$, then if $0< a< \alpha_G$, then $E^a_G$ can not be attained, if $a>\alpha_G$, then $E^a_G$ can be attained.
\end{theorem}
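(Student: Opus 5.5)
Define $\alpha_G=\inf\{a>0:E^{a,p}_G<0\}$ (with $\inf\emptyset=+\infty$, although we will see the set is nonempty). The argument splits into the two regimes $a<\alpha_G$ and $a>\alpha_G$, and uses Lemma \ref{funproper}, Lemma \ref{lemma:normalizedless} and Lemma \ref{lemma:normalizedcompare}.

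\emph{Monotonicity and finiteness of $\alpha_G$.} By Lemma \ref{funproper}(4), $a\mapsto E^{a,p}_G$ is non-increasing. Hence $E^{a,p}_G<0$ for every $a>\alpha_G$. By Lemma \ref{funproper}(1), $E^{a,p}_G=0$ for every $a<\alpha_G$. To see that $\alpha_G<\infty$, compare with the lattice. Lemma \ref{lemma:thresholdlattice} gives $\alpha_{\mathbb{Z}^d}<\infty$, and for $a>\alpha_{\mathbb{Z}^d}$ the minimum $E^{a,p}_{\mathbb{Z}^d}$ is attained. Lemma \ref{lemma:normalizedless}(2) then gives $E^{a,p}_G<E^{a,p}_{\mathbb{Z}^d}\le 0$, so $\alpha_G\le\alpha_{\mathbb{Z}^d}$.

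\emph{Case $0<a<\alpha_G$.} Here $E^{a,p}_G=0$. Suppose $u\in S^a_G$ attains it. Then Lemma \ref{funproper}(3) with $b>0$ small, chosen so that $a+b<\alpha_G$, gives
\[
0=E^{a+b,p}_G<E^{a,p}_G+E^{b,p}_G=0,
\]
which is a contradiction. A direct alternative is also available: $\Phi^p_G(u)=0$ forces $\tfrac12\|\nabla_G u\|^2=\tfrac1p\|u\|_p^p>0$. Scaling $u$ to $tu$ with $t>1$ gives $\Phi^p_G(tu)=t^2\big(\tfrac12\|\nabla_G u\|^2-\tfrac{t^{p-2}}{p}\|u\|_p^p\big)<0$ while $\|tu\|_{l^2}^2=t^2a$, so $E^{t^2a,p}_G<0$ for all $t>1$. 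Taking $t$ close to $1$ so that $t^2a<\alpha_G$ contradicts the definition of $\alpha_G$.

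\emph{Case $a>\alpha_G$.} Here $E^{a,p}_G<0$, and we want to show it is attained. The natural tool is Lemma \ref{lemma:normalizedcompare}, but it needs the strict inequality $E^{a,p}_G<E^{a,p}_{\mathbb{Z}^d}$, and this is the main obstacle. When $E^{a,p}_{\mathbb{Z}^d}<0$, Lemma \ref{lemma:thresholdlattice}(2) says it is attained, Lemma \ref{lemma:normalizedless}(2) gives the strict inequality, and we are done. When $E^{a,p}_{\mathbb{Z}^d}=0$, we have $E^{a,p}_G<0=E^{a,p}_{\mathbb{Z}^d}$ directly, so Lemma \ref{lemma:normalizedcompare} applies again. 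Since $E^{a,p}_{\mathbb{Z}^d}\le 0$ always, these two cases are exhaustive. Equivalently, the subadditivity and Brezis--Lieb argument inside the proof of Lemma \ref{lemma:normalizedcompare} only uses that the vanishing alternative yields $\liminf\Phi^p_{\mathbb{Z}^d}(u_n)\le E^{a,p}_G$, which contradicts $E^{a,p}_G<E^{a,p}_{\mathbb{Z}^d}$ in either case.
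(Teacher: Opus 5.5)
Your proof is correct and follows the paper's argument. For $0<a<\alpha_G$ you rule out attainment with the strict subadditivity of Lemma \ref{funproper}(3), choosing $b$ small so that $a+b<\alpha_G$, whereas the paper takes $b=\alpha_G-a$ and uses $E^{\alpha_G,p}_G=0$ by continuity. For $a>\alpha_G$ you split on whether $E^{a,p}_{\mathbb{Z}^d}$ is zero or negative and conclude via Lemmas \ref{lemma:thresholdlattice}, \ref{lemma:normalizedless}(2) and \ref{lemma:normalizedcompare}, exactly as the paper does; your extra observation $\alpha_G\le\alpha_{\mathbb{Z}^d}<\infty$ is a useful detail the paper leaves implicit.
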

\begin{proof}
    If $0< a< \alpha_G$ and $E^{a,p}_G$ can be attained, then $E^{\alpha_G,p}_G< E^{a,p}_G\leq 0$, this is in contradiction to $E^{\alpha_G,p}_G=0$.\\
    
    If $a>\alpha_G$. If $E^{a,p}_{\mathbb{Z}^d}=0$, then $E^{a,p}_{G}< E^{a,p}_{\mathbb{Z}^d}=0$, by Lemma \ref{lemma:normalizedcompare} $E^{a,p}_{G}$ is attained. If $E^{a,p}_{\mathbb{Z}^d}< 0$, then by Lemma \ref{lemma:thresholdlattice} $E^a_{\mathbb{Z}^d}$ can be attained. By (2) of Lemma \ref{lemma:normalizedless} we have $E^{a,p}_{G}< E^{a,p}_{\mathbb{Z^d}}$, by Lemma \ref{lemma:normalizedcompare} again $E^{a,p}_G$ can be attained.
\end{proof}
This completes the proof for (1) in Theorem \ref{Thm 1}. Now we turn to the proof of (2) in Theorem \ref{Thm 1}. First, we establish properties for the function $a\mapsto J^{a,p,q}_{G}.$ The proof is similar to Lemma \ref{funproper}.
\begin{lemma}\label{sobolevfunproperty}
     (1) $J^{\theta a,p,q}_G<\theta J^{a,p,q}_G$ for all $\theta>1$.
     (2) $J^{a+b,p,q}_G< J^{a,p,q}_G+J^{b,p,q}_G$ for $a,b>0$.\\
     (3) Function $a\mapsto J^{a,p,q}_G$ is continuous.
     
\end{lemma}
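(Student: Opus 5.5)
The whole lemma should follow from the homogeneity of the functional under scaling. For $u$ with $\Vert u\Vert_{l^q(G)}^q=a$ and $\theta>0$, the function $\theta^{1/q}u$ satisfies $\Vert \theta^{1/q}u\Vert_{l^q(G)}^q=\theta a$ and $\int_G\vert\nabla_G(\theta^{1/q}u)\vert_p^pdx=\theta^{p/q}\int_G\vert\nabla_G u\vert_p^pdx$. Since $u\mapsto\theta^{1/q}u$ is a bijection between the two constraint sets, this gives the exact scaling law
\[
J^{\theta a,p,q}_G=\theta^{p/q}J^{a,p,q}_G,\qquad\text{equivalently}\qquad J^{a,p,q}_G=a^{p/q}J^{1,p,q}_G .
\]
Lemma \ref{sobolev} gives $J^{1,p,q}_G\geq (S^{p,q}_G)^p>0$. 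Moreover $J^{1,p,q}_G<\infty$, since any nonzero $C_0(G)$ function can be normalized.

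(1) Because $q\geq\frac{dp}{d-p}>p$, we have $p/q<1$, so $\theta^{p/q}<\theta$ for every $\theta>1$. Combined with $J^{a,p,q}_G>0$, this gives $J^{\theta a,p,q}_G=\theta^{p/q}J^{a,p,q}_G<\theta J^{a,p,q}_G$. Positivity of $J$ is essential here; if $J$ were zero the inequality would only be an equality.

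(2) Fix $a,b>0$ and assume without loss of generality $a\geq b$. Apply (1) with $\theta=\frac{a+b}{a}>1$ and with $\theta=\frac{a+b}{b}>1$. This gives $\frac{a}{a+b}J^{a+b,p,q}_G<J^{a,p,q}_G$ and $\frac{b}{a+b}J^{a+b,p,q}_G<J^{b,p,q}_G$. Summing the two inequalities yields $J^{a+b,p,q}_G<J^{a,p,q}_G+J^{b,p,q}_G$. Alternatively, strict concavity of $t\mapsto t^{p/q}$ together with $J^{1,p,q}_G>0$ gives $(a+b)^{p/q}<a^{p/q}+b^{p/q}$ directly.

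(3) The formula $J^{a,p,q}_G=a^{p/q}J^{1,p,q}_G$ exhibits $a\mapsto J^{a,p,q}_G$ as a finite positive constant times a continuous power function, so it is continuous on $(0,\infty)$.

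The only step needing real care is positivity and finiteness of $J^{1,p,q}_G$, which is supplied by Lemma \ref{sobolev} and by testing against compactly supported functions. Everything else is elementary. This is much simpler than Lemma \ref{funproper}, because the Sobolev functional has no competing nonlinear term.
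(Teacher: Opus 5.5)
Your proposal is correct and follows essentially the paper's argument: rescale by $\theta^{1/q}$, use $p/q<1$ together with $J^{a,p,q}_G>0$ for (1), and apply (1) twice for (2). The one difference is that you record the exact homogeneity $J^{a,p,q}_G=a^{p/q}J^{1,p,q}_G$, which makes (3) immediate, whereas the paper derives only $J^{\theta a,p,q}_G\le\theta^{p/q}J^{a,p,q}_G$ in (1) and proves (3) separately by rescaling near-minimizers $v_n=(a/a_n)^{1/q}u_n$.
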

\begin{proof}
    (1)Let $u\in l^{q}(G)$ such that $\Vert u\Vert_{l^q(G)}^q=a$ and $\int_{G}\vert\nabla_G u\vert^p_pdx\leq J^{a,p,q}_G+\epsilon$. Then for $\theta>1$, we have $\Vert \theta^{\frac{1}{q}} u\Vert_{l^q(G)}^q=\theta\Vert u\Vert_{l^q(G)}^q=\theta a.$ Hence, we have
    \begin{align*}
        J^{\theta a,p,q}_G&\leq\theta^{\frac{p}{q}}\int_{G}\vert\nabla_G u\vert^pdx\\
        &\leq \theta^{\frac{p}{q}}(J^{a,p,q}_G+\epsilon).
    \end{align*}
    Since $\epsilon>0$ is arbitrary and $J^{a,p,q}_G>0$, we have 
    \[
    J^{\theta a,p,q}_G\leq \theta^{\frac{p}{q}}J^{a,p,q}_G<\theta J^{a,p,q}_G.
    \]

    (2)Without loss of generality, we assume $a>b.$ Then by (1), we have
    \begin{align*}
        J^{a+b,p,q}_G&=J^{a(1+\frac{b}{a}),p,q}_G\\
        &< (1+\frac{b}{a})J^{a,p,q}_G\\
        &=J^{a,p,q}_G+\frac{b}{a}J^{\frac{a}{b}b,p,q}_G\\
        &<J^{a,p,q}_G+J^{b,p,q}_G.
    \end{align*}

    (3) Given a sequence $\{a_n\}$ such that $\lim\limits_{n\rightarrow\infty}a_n=a$. Take $u_n\in l^q(G)$ such that $\Vert u_n\Vert_{l^q(G)}^q=a_n$ and $\int_{G}\vert\nabla_G u\vert^p_pdx\leq J^{a_n,p,q}_G+\frac{1}{n}$. Set $v_n=(\frac{a}{a_n})^{\frac{1}{q}}u_n$, then $\Vert v_n\Vert_{l^q(G)}^q=a$. We compute that
    \begin{align*}
        &\vert\int_{G}\vert\nabla_G u_n\vert^p_pdx-\int_{G}\vert\nabla_G v_n\vert^p_pdx\vert\\
        &=\vert 1-(\frac{a}{a_n})^{\frac{p}{q}}\vert\int_{G}\vert\nabla_G u_n\vert^p_pdx\\
        &\leq C\vert a_n^{\frac{p}{q}}-a^{\frac{p}{q}}\vert.
    \end{align*}
    Hence $J^{a,p,q}_{G}\leq\int_{G}\vert\nabla_G v_n\vert^p_pdx=\int_{G}\vert\nabla_G u_n\vert^p_pdx+o(1)=J^{a_n}_G+o(1)$. The argument to $J^{a_n,p,q}_G\leq J^{a,p,q}_{G}+o(1)$ is similar. 
\end{proof}
Next, we establish a useful lemma; the proof is similar to Lemma \ref{lemma:normalizedcompare}.
\begin{lemma}\label{lemma:sobolevcompare}
    If $J^{a,p,q}_G\textless J^{a,p,q}_{\mathbb{Z}^d}$, then $J^{a,p,q}_{G}$ is attained.
\end{lemma}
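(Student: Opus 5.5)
Take a minimizing sequence $\{u_n\}$ with $\Vert u_n\Vert_{l^q(G)}^q=a$ and $\int_G\vert\nabla_G u_n\vert_p^p\,dx\to J^{a,p,q}_G$. It is bounded in $D^{1,p}(G)$. By Lemma \ref{sobolev} it is also bounded in $l^q(G)$, so after passing to a subsequence $u_n\to u$ pointwise on $V$, and also weakly in $l^q(G)$ (for $q>1$). We will show $\Vert u\Vert_{l^q(G)}^q=a$ by ruling out the two alternatives $u=0$ and $0<\beta:=\Vert u\Vert_{l^q(G)}^q<a$, in the same spirit as Lemma \ref{lemma:normalizedcompare}.

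\emph{Case $u=0$.} The deleted set $E^{'}$ is finite, so pointwise convergence gives $\sum_{(x,y)\in E^{'}}\vert u_n(x)-u_n(y)\vert^p\to 0$. Since
\[
\int_{\mathbb{Z}^d}\vert\nabla_{\mathbb{Z}^d}u_n\vert_p^p\,dx=\int_G\vert\nabla_G u_n\vert_p^p\,dx+\sum_{(x,y)\in E^{'}}\vert u_n(x)-u_n(y)\vert^p,
\]
and $u_n$ is admissible for $J^{a,p,q}_{\mathbb{Z}^d}$, we obtain $J^{a,p,q}_{\mathbb{Z}^d}\le J^{a,p,q}_G+o(1)$. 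This contradicts the hypothesis $J^{a,p,q}_G<J^{a,p,q}_{\mathbb{Z}^d}$.

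\emph{Case $0<\beta<a$.} Apply the Brezis–Lieb lemma (Lemma \ref{BreLieb}) twice, using only pointwise convergence and uniform bounds. On $V$ with exponent $q$ it gives $\Vert u_n-u\Vert_{l^q(G)}^q\to a-\beta$. On the edge measure space with exponent $p$ (as in part (2) of Corollary \ref{NormBreLieb}, whose proof only needs pointwise convergence of edge differences and a uniform $l^p(E)$ bound, here supplied by the $D^{1,p}$ bound) it gives
\[
\int_G\vert\nabla_G u_n\vert_p^p=\int_G\vert\nabla_G u\vert_p^p+\int_G\vert\nabla_G(u_n-u)\vert_p^p+o(1).
\]
The first term on the right is at least $J^{\beta,p,q}_G$. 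For the second, rescale $u_n-u$ to $l^q$-mass exactly $a-\beta$; by continuity (Lemma \ref{sobolevfunproperty}(3)) its energy is at least $J^{a-\beta,p,q}_G+o(1)$. Hence $J^{a,p,q}_G\ge J^{\beta,p,q}_G+J^{a-\beta,p,q}_G$, which contradicts the strict subadditivity in Lemma \ref{sobolevfunproperty}(2).

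Therefore $\Vert u\Vert_{l^q(G)}^q=a$, so $u$ is admissible. Pointwise convergence and Fatou's lemma on edges give $\int_G\vert\nabla_G u\vert_p^p\le\liminf_n\int_G\vert\nabla_G u_n\vert_p^p=J^{a,p,q}_G$, so $u$ attains $J^{a,p,q}_G$.

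The main obstacle is the splitting step in the case $0<\beta<a$. The sequence is bounded in $D^{1,p}$, not in $l^p$, so Corollary \ref{NormBreLieb}(2) cannot be quoted as stated and the Brezis–Lieb lemma must be applied directly on edges. In addition, the case $p=1$ needs weak-$*$ compactness, or simply a diagonal argument, to extract the pointwise limit; pointwise limits suffice throughout.
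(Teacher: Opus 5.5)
Your proposal is correct and follows the paper's proof essentially step for step. You take a minimizing sequence and its pointwise/weak limit, rule out $u=0$ using the finitely many deleted edges and the hypothesis $J^{a,p,q}_G<J^{a,p,q}_{\mathbb{Z}^d}$, rule out $0<\beta<a$ by Brezis--Lieb splitting together with strict subadditivity, and conclude by lower semicontinuity. Your extra care is a genuine improvement in rigor: you apply Brezis--Lieb directly on edges using the $D^{1,p}$ bound, since Corollary \ref{NormBreLieb}(2) has an $l^p$ hypothesis that does not literally fit here, and you rescale $u_n-u$ to mass exactly $a-\beta$, a step the paper leaves implicit.
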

\begin{proof}
    Let $\{u_n\}\subset \{u\in l^q(G):\Vert u\Vert^q_{l^q(G)}=a\}$ be a minimizing sequence with respect to $J^{a,p,q}_G$. Taking a subsequence if necessary, we assume $u_n\rightharpoonup u$ in $l^q(G)$, we claim that $\Vert u\Vert_{l^q(G)}^q=a.$\\

    If the claim is not true. If $u=0$, then $\lim\limits_{n\rightarrow \infty}u_n(x)=0$ for all $x\in V.$ Hence for any $\epsilon>0,$ we can take $n$ sufficiently large such that $\vert u_n(x)\vert\leq\epsilon$ and $\vert u_n(y)\vert\leq\epsilon$ for all $(x,y)\in E^{'}$. Also, we take $n$ sufficiently large such that $\int_G\vert\nabla_G u\vert^pdx\leq J^a_G+\epsilon$. Then we have
    \begin{align*}
        J^{a,p,q}_{\mathbb{Z}^d}&\leq \int_{\mathbb{Z}^d}\vert\nabla_{\mathbb{Z}^d} u_n\vert^p_pdx\\
        &=\int_G\vert\nabla_G u\vert^p_pdx+\frac{1}{2}\sum_{(x,y)\in\mathcal{E}^{'}}\vert u_n(x)-u_n(y)\vert^p\\
        &\leq J^{a,p,q}_G+C\epsilon.
    \end{align*}
    Taking $\epsilon$ sufficiently small contradict $J^{a,p,q}_G< J^{a,p,q}_{\mathbb{Z}^d}.$\\

    If $0<\Vert u\Vert_{l^q(G)}^q< a$, set $\beta=\Vert u\Vert_{l^q(G)}^q$. By Lemma \ref{NormBreLieb} we have 
    \begin{align*}
        &\Vert u_n-u\Vert_{l^q(G)}^q=\Vert u_n\Vert_{l^q(G)}^q-\Vert u\Vert _{l^q(G)}^q + o(1),\\
        &\Vert u_n-u\Vert_{D^{1,p}(G)}^p=\Vert u_n\Vert_{D^{1,p}(G)}^p-\Vert u\Vert _{D^{1,p}(G)}^p + o(1).
    \end{align*}
    Hence we have
    \begin{align*}
        J^{\beta,p,q}_G&\leq \int_{G}\vert\nabla_G u\vert^p_pdx\\
        &=\int_{G}\vert\nabla_G u_n\vert^p_pdx-\int_{G}\vert\nabla_G( u_n-u)\vert^p_pdx+o(1)\\
        &\leq J^{a,p,q}_{G} - J^{a-\beta,p,q}_{G}+o(1).
    \end{align*}
    Taking $n\rightarrow\infty$ we have $ J^{\beta,p,q}_G+J^{a-\beta,p,q}_{G}\leq J^{a,p,q}_{G}$, this contradicts (2) in Lemma \ref{sobolevfunproperty}.\\
    
    The claim is proved. Hence $\Vert u\Vert_{l^q(G)}^q=a$. Also, by the weak lower semi-continuity of the norm $D^{1,p}$, we have $E^{a}_G\leq\int_{G}\vert\nabla_G u\vert^p_pdx\leq \lim\limits_{n\rightarrow\infty}\int_{G}\vert\nabla_G u_n\vert^p_pdx=E^a_G$. Hence, $E^a_G$ is attained by $u$.
\end{proof}
Using Lemma \ref{lemma:sobolevcompare}, we can prove (2) in Theorem \ref{Thm 1}.
\begin{theorem}
    If $q>\frac{dp}{d-p}$, then for all $a>0$, $J^{a,p,q}_{G}$ is attained.
\end{theorem}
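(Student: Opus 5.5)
The plan mirrors the proof of part (1): we reduce everything to Lemma \ref{lemma:sobolevcompare}. It therefore suffices to show $J^{a,p,q}_G<J^{a,p,q}_{\mathbb{Z}^d}$ for every $a>0$ when $q>\frac{dp}{d-p}$.

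For this we invoke the result of \cite{MR4328634}: in the super-critical regime $q>\frac{dp}{d-p}$, $J^{a,p,q}_{\mathbb{Z}^d}$ is attained for every $a>0$ by some $u\in D^{1,p}(\mathbb{Z}^d)$ with $\Vert u\Vert^q_{l^q(\mathbb{Z}^d)}=a$. This is the same role Lemma \ref{lemma:thresholdlattice} played before.

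We then copy the argument of (2) in Lemma \ref{lemma:normalizedless}.
\begin{itemize}
\item Fix a deleted edge $(z,w)\in E^{'}$, and assume without loss of generality $w=z+e_1$.
\item Since $u\in l^q(\mathbb{Z}^d)$ and $u\neq0$, $u$ cannot be constant along every line in direction $e_1$: $u(x+ke_1)\to0$ as $k\to\infty$. Hence there is $y$ with $u(y)\neq u(y+e_1)$.
\item Set $\tilde u(x)=u(x-z+y)$. By translation invariance of the lattice functionals, $\tilde u$ is still a minimizer for $J^{a,p,q}_{\mathbb{Z}^d}$ with the same $l^q$ norm, and $\tilde u(z)\neq\tilde u(z+e_1)$.
\item Since $E\backslash E^{'}\subset E$, we have $\int_G|\nabla_G \tilde u|_p^p=\int_{\mathbb{Z}^d}|\nabla_{\mathbb{Z}^d}\tilde u|_p^p-\frac12\sum_{(x,x')\in E^{'}}|\tilde u(x)-\tilde u(x')|^p$. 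Here every deleted edge contributes a nonnegative amount and the edge $(z,z+e_1)$ contributes a strictly positive one.
\item Therefore
\[
J^{a,p,q}_G\leq \int_G|\nabla_G\tilde u|_p^p\,dx<\int_{\mathbb{Z}^d}|\nabla_{\mathbb{Z}^d}\tilde u|_p^p\,dx=J^{a,p,q}_{\mathbb{Z}^d}.
\]
\end{itemize}
One small point needs checking: $\tilde u$ must be an admissible competitor for $J^{a,p,q}_G$. It lies in $l^q(G)=l^q(\mathbb{Z}^d)$, since the vertex set is unchanged, and its $G$-energy is finite by the displayed identity.

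Applying Lemma \ref{lemma:sobolevcompare} then gives attainment of $J^{a,p,q}_G$ for all $a>0$.

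The only substantive input is the existence of the lattice minimizer. If we could not simply cite \cite{MR4328634}, this would be the main obstacle: one would run concentration-compactness with translations to the maximum points of $u_n$. Nonvanishing comes from the super-critical interpolation $\Vert u\Vert_{l^q}^q\le\Vert u\Vert_{l^\infty}^{q-p^*}\Vert u\Vert_{l^{p^*}}^{p^*}$ together with the Sobolev inequality. Dichotomy is excluded by the strict subadditivity in Lemma \ref{sobolevfunproperty}(2), which holds on $\mathbb{Z}^d$ by the same proof. Everything else is the comparison step above, which is routine.
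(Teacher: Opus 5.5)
Your proposal is correct and matches the paper's proof essentially step for step. The paper likewise takes the lattice minimizer for $q>\frac{dp}{d-p}$ and translates it so that a deleted edge $(y,y+e_1)\in E^{'}$ carries a nonzero difference, which gives $J^{a,p,q}_G<J^{a,p,q}_{\mathbb{Z}^d}$, and then concludes by Lemma \ref{lemma:sobolevcompare}; the factor $\frac12$ in front of your sum over $E^{'}$ depends only on whether edges are counted once or twice and is immaterial, since only strict positivity is used.
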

\begin{proof}
    It is known that for all $a>0$, $J^{a,p,q}_{\mathbb{Z}^d}$ is attained. We claim that $J^{a,p,q}_{G}<J^{a,p,q}_{\mathbb{Z}^d}$. \\

    Suppose $J^{a,p,q}_{\mathbb{Z}^d}$ is attained by $u$. Take an edge $(y,z)\in E^{'}$. Without loss of generality, we assume $z=y+e_1.$ We claim that there exists $x_0\in \mathbb{Z}^d$ such that $u(x_0)\neq u(x_0+e_1)$. If this not true, then for all $x\in\mathbb{Z}^d$ we have $u(x)=u(x+e_1)$. Hence $u(x)=u(x+ke_1)$ for all $k\in\mathbb{Z}$. Since $u\in l^q(\mathbb{Z}^d)$, $\lim\limits_{\vert x\vert_{\infty}\rightarrow\infty}u(x)=0$. Letting $k\rightarrow\infty$, we have $u(x)=0$ for all $x\in\mathbb{Z}^d$. This is a contradiction. Hence there exists $x_0\in \mathbb{Z}^d$ such that $u(x_0)\neq u(x_0+e_1)$. Set $\tilde{u}(x)=u(x+x_0-y)$, then we have
    \[
    \vert \tilde{u}(y)-\tilde{u}(z)\vert=\vert u(x_0)-u(x_0+e_1)\vert\neq 0.
    \]
    Hence we have
    \begin{align*}
        J^{a,p,q}_G&\leq\int_{G}\vert \nabla_G \tilde{u}\vert^p_pdx \\
        &\leq \int_{\mathbb{Z}^d}\vert \nabla_{\mathbb{Z}^d} \tilde{u}\vert^p_pdx - \vert\tilde{u}(y)-\tilde{u}(z)\vert^p\\
        &< \int_{\mathbb{Z}^d}\vert \nabla_{\mathbb{Z}^d} \tilde{u}\vert^p_pdx\\
        &=J^{a,p,q}_{\mathbb{Z}^d}.
    \end{align*}
    This proves the claim. Hence, by Lemma \ref{lemma:sobolevcompare}, $J^a_G$ can be attained for all $a>0$.
\end{proof}

In the end of this section, we prove (3) in Theorem \ref{Thm 1}.
\begin{theorem}
    If $q=\frac{dp}{d-p}$ and for some $a_0>0$, $J^{a,p,q}_G<J^{a,p,q}_{\mathbb{Z}^d}$, then for all $a>0$ $J^{a,p,q}_G$ can be attained. Moreover, there exists $G$ satisfying conditions.
\end{theorem}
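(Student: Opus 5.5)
The plan has two parts: a scaling argument that spreads the strict inequality from $a_0$ to every $a>0$, and an explicit construction of a graph $G$ for the "moreover" part.

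\emph{Step 1: exact homogeneity.} For any connected graph $H$ among $\mathbb{Z}^d$ and $G$, the map $u\mapsto t^{1/q}u$ is a bijection from $\{\Vert u\Vert_{l^q(H)}^q=a\}$ onto $\{\Vert u\Vert_{l^q(H)}^q=ta\}$. It multiplies $\int_H\vert\nabla_H u\vert_p^p dx$ by exactly $t^{p/q}$. Hence
\[
J^{a,p,q}_H=a^{p/q}J^{1,p,q}_H \quad\text{for all } a>0.
\]
This is the equality version of the computation in (1) of Lemma \ref{sobolevfunproperty}, run in both directions.

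\emph{Step 2: the first assertion.} Suppose $J^{a_0,p,q}_G<J^{a_0,p,q}_{\mathbb{Z}^d}$ for some $a_0>0$. Dividing by $a_0^{p/q}$ gives $J^{1,p,q}_G<J^{1,p,q}_{\mathbb{Z}^d}$. Multiplying by $a^{p/q}$ then gives $J^{a,p,q}_G<J^{a,p,q}_{\mathbb{Z}^d}$ for every $a>0$. Lemma \ref{lemma:sobolevcompare} then yields that $J^{a,p,q}_G$ is attained. That lemma's proof uses only Corollary \ref{NormBreLieb} and Lemma \ref{sobolevfunproperty}, so it holds for any $q\geq\frac{dp}{d-p}$, including the critical exponent.

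\emph{Step 3: constructing $G$.} This is the real content, since we need the strict inequality for some graph. The plan is to trap mass in a box that is almost disconnected from the rest of the lattice. Fix $R$ large and let $E^{'}$ be the set of all edges of $\partial B_R$ except one boundary edge $(y_0,z_0)$. The graph $G=(V,E\backslash E^{'})$ is connected: $B_R$ and $\mathbb{Z}^d\setminus B_R$ are each connected, and they are joined by $(y_0,z_0)$. Moreover $E^{'}$ is finite and contained in $B_{R+1}$.

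Take the test function $u=\mathds{1}_{B_R}$. Then $\Vert u\Vert_{l^q(G)}^q=\vert B_R\vert=(2R-1)^d$. Because $\vert\nabla_G u\vert_p^p$ carries the factor $\frac12$ and each edge is counted from both endpoints, the energy $\int_G\vert\nabla_G u\vert_p^pdx$ equals the sum over edges of $\vert u(x)-u(y)\vert^p$, which is $1$. Normalizing by Step 1 gives
\[
J^{1,p,q}_G\leq \vert B_R\vert^{-p/q}=(2R-1)^{-dp/q}.
\]

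On the other side, $J^{1,p,q}_{\mathbb{Z}^d}\geq (S^{p,q}_{\mathbb{Z}^d})^p>0$ by the discrete Sobolev inequality on $\mathbb{Z}^d$. This lower bound does not depend on $R$. Choosing $R$ so large that $(2R-1)^{-dp/q}<(S^{p,q}_{\mathbb{Z}^d})^p$ gives $J^{1,p,q}_G<J^{1,p,q}_{\mathbb{Z}^d}$, so the hypothesis holds with $a_0=1$.

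The only points needing care are the connectivity of $G$ and the edge-counting convention in the energy; both are routine. The main step is Step 3, namely choosing a perturbation whose Rayleigh quotient can be driven below the fixed lattice Sobolev constant. The "bottleneck box" is designed exactly for that, because its energy stays equal to $1$ while its $l^q$ mass grows like $R^d$.
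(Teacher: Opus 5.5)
Your proposal is correct and follows the paper's proof essentially step for step: exact $a^{p/q}$-homogeneity carries the strict inequality from $a_0$ to every $a>0$, and Lemma \ref{lemma:sobolevcompare} then gives attainment. The example is the same one the paper uses, the box $B_R$ with all boundary edges but one deleted, tested against the normalized indicator of $B_R$; your explicit checks (connectivity of $G$, $E'\subset B_{R+1}$, and the $R$-independent bound $J^{1,p,q}_{\mathbb{Z}^d}\ge (S^{p,q}_{\mathbb{Z}^d})^p>0$) fill in details the paper leaves implicit.
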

\begin{proof}
    Since $J^{a,p,q}_{G}=(S^{a,p,q}_Ga^{\frac{1}{q}})^{p}$ and $J^{a,p,q}_{\mathbb{Z}^d}=(S^{a,p,q}_{\mathbb{Z}^d}a^{\frac{1}{q}})^{p}$, we have $S^{a_0,p,q}_G<S^{a_0,p,q}_{\mathbb{Z}^d}$. Hence, we have $J^{a,p,q}_{G}<J^{a,p,q}_{\mathbb{Z}^d}$ for all $a>0$. By Lemma \ref{lemma:sobolevcompare}, $J^{a,p,q}_{G}$ can be attained. Now we construct a graph $G$ satisfying conditions. Arbitrarily choose a edge $e_R\in \partial B_R$, set $E^{'}=\partial B_R\backslash \{e_R\}$ and $G=(V,E\backslash E^{'})$. For $a=1$, consider function $f_R=\frac{1}{\vert  B_R\vert^{\frac{1}{q}}}\mathbf{1}_{B_R}$, then $\Vert f_R\Vert_{l^q(G)}^q=1$ and $\Vert f_R\Vert_{D^{1,p}}^p=\frac{1}{\vert  B_R\vert^{\frac{p}{q}}}$. Hence we have $J^{a,p,q}_G\leq\frac{1}{\vert  B_R\vert^{\frac{p}{q}}}$. Choose $R$ sufficiently large such that $\frac{1}{\vert  B_R\vert^{\frac{p}{q}}}<J^{a,p,q}_{\mathbb{Z}^d}$, then we have the desired graph.
\end{proof}
\section{perturbation by adding edges}\label{sec:addingedges}
Let $G=(V,E\cup E^{'})$ be a graph obtained by adding edges $E^{'}$ in lattice graphs $\mathbb{Z}^d$ within the ball $B_R$. When we consider the attainability of $J^{a,p,q}_G$, we assume $d\geq 3$ further. First we show that $E^a_G\leq E^a_{\mathbb{Z}^d}$.
\begin{lemma}\label{lemma:addcompare}
    For all $a\textgreater0$, we have $E^{a,p}_G\leq E^{a,p}_{\mathbb{Z}^d}$. 
\end{lemma}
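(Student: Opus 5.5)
The inequality is not a pointwise comparison: adding edges only increases the gradient term, so $\Phi^p_G(u)\geq\Phi^p_{\mathbb{Z}^d}(u)$ for every $u$, which points the wrong way. The plan is instead to use the translation invariance of $\Phi^p_{\mathbb{Z}^d}$. We push an almost-minimizer for $E^{a,p}_{\mathbb{Z}^d}$ far away from $B_R$, where the added edges $E^{'}$ live, so that those edges see only negligible values of the function.

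\emph{Step 1: choose an almost-minimizer.} Fix $a>0$ and $\epsilon>0$, and pick $u\in S^a_{\mathbb{Z}^d}$ with $\Phi^p_{\mathbb{Z}^d}(u)\leq E^{a,p}_{\mathbb{Z}^d}+\epsilon$. This is possible because $E^{a,p}_{\mathbb{Z}^d}>-\infty$.

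\emph{Step 2: translate.} For $k\in\mathbb{N}$ set $u_k(x)=u(x+ke_1)$. Translation preserves $l^2$ and $l^p$ norms and the lattice Dirichlet energy, so $u_k\in S^a_G$ (the vertex set is unchanged) and $\Phi^p_{\mathbb{Z}^d}(u_k)=\Phi^p_{\mathbb{Z}^d}(u)$. Since every added edge has both endpoints in $B_R$,
\[
\Phi^p_G(u_k)=\Phi^p_{\mathbb{Z}^d}(u)+\frac{1}{2}\sum_{(x,y)\in E^{'}}\vert u_k(x)-u_k(y)\vert^2
\leq \Phi^p_{\mathbb{Z}^d}(u)+C\vert E^{'}\vert\sup_{x\in B_R}\vert u(x+ke_1)\vert^2 .
\]
If the paper's convention counts each added edge once from each endpoint, only the constant changes.

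\emph{Step 3: let $k\to\infty$.} Because $u\in l^2(\mathbb{Z}^d)$, we have $u(x)\to 0$ as $\vert x\vert_\infty\to\infty$. The set $B_R$ is finite, so $\sup_{x\in B_R}\vert u(x+ke_1)\vert\to 0$. Taking $k$ large in Step 2 gives
\[
E^{a,p}_G\leq\Phi^p_G(u_k)\leq E^{a,p}_{\mathbb{Z}^d}+2\epsilon,
\]
and since $\epsilon>0$ is arbitrary, $E^{a,p}_G\leq E^{a,p}_{\mathbb{Z}^d}$.

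The only point needing care is Step 2. One must use a finite-energy function on $\mathbb{Z}^d$ and control exactly the finitely many added edge terms, which is why the finiteness of $E^{'}$ and its confinement to $B_R$ are essential. The rest is routine.
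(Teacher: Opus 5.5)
Your proposal is correct and follows essentially the same route as the paper: take an $\epsilon$-almost minimizer on $\mathbb{Z}^d$, translate it far from $B_R$ so that the finitely many added edge terms are $O(\epsilon^2)$, and let $\epsilon\to 0$. The only cosmetic difference is that the paper uses a single fixed shift $3R_1e_1$, with $R_1$ chosen so that $\vert u\vert\leq\epsilon$ off $B_{R_1}$, while you let the shift $k\to\infty$; your version also avoids the paper's implicit requirement $3R_1-R\geq R_1$.
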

\begin{proof}
    For any $\epsilon\textgreater0$, take $u\in l^2(\mathbb{Z}^d)$ such that $\Phi^{p}_{\mathbb{Z}^d}(u)\leq E^{a,p}_{\mathbb{Z}^d}+\epsilon$. Take $R_1>0$ sufficiently large such that $\vert u(x)\vert\leq\epsilon$ for all $x\in B_{R_1}^c$. Set $\tilde{u}(x)=u(x+3R_1e_1)$, then we have
    \begin{align*}
        E^{a,p}_{G}&\leq\Phi^p_G(\tilde{u})\\
        &=\Phi^p_{\mathbb{Z}^d}(\tilde{u})+\sum\limits_{(x,y)\in E^{'}}\frac{1}{2}\vert \tilde{u}(x)-\tilde{u}(y)\vert^2\\
        &\leq E^{a,p}_{\mathbb{Z}^d}+\epsilon+C\epsilon^2.
    \end{align*}
    Since $\epsilon$ is arbitrary, we have the desired result.
\end{proof}
Also it is obvious that $E^{a,p}_{\mathbb{Z}^d}\leq E^{a,p}_{G}$, we have $E^{a,p}_{G}=E^{a,p}_{\mathbb{Z}^d}$. 
Now we are ready to prove (1) in Theorem \ref{Thm 2}.
\begin{theorem}
    There exists $E^{'}$ and $a_1>0$ such that $E^{a,p}_G$ can not be attained for all $a>a_1$.
\end{theorem}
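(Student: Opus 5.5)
We combine the identity $E^{a,p}_G=E^{a,p}_{\mathbb{Z}^d}$, obtained from Lemma \ref{lemma:addcompare} and the trivial reverse inequality, with a strict comparison. Suppose $E^{a,p}_G$ is attained by some $u\in S^a_G$. Since $G$ has the same vertex set and strictly more edges, for every $v\in S^a_G$
\[
\Phi^p_G(v)=\Phi^p_{\mathbb{Z}^d}(v)+\frac12\sum_{(x,y)\in E'}|v(x)-v(y)|^2 .
\]
Then $E^{a,p}_{\mathbb{Z}^d}\le \Phi^p_{\mathbb{Z}^d}(u)\le \Phi^p_G(u)=E^{a,p}_G=E^{a,p}_{\mathbb{Z}^d}$. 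Hence two things follow: $u$ is a minimizer for $E^{a,p}_{\mathbb{Z}^d}$, and $u(x)=u(y)$ for every added edge $(x,y)\in E'$. The strategy is to choose $E'$ so that the second condition is incompatible with being a lattice minimizer.

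\textbf{Choice of $E'$ and $a_1$.} The simplest choice is a single added edge $E'=\{(0,2e_1)\}$, which is not a lattice edge. For the threshold, take $a_1$ larger than $\alpha_{\mathbb{Z}^d}$, so that for $a>a_1$ we are in the regime of Lemma \ref{lemma:thresholdlattice} where lattice minimizers exist; we may also enlarge $a_1$ if needed for the steps below. A minimizer $u$ of $E^{a,p}_{\mathbb{Z}^d}$ satisfies the Euler--Lagrange equation $-\Delta u+\lambda u=|u|^{p-2}u$ on $\mathbb{Z}^d$. Replacing $u$ by $|u|$ does not increase the energy, so we may assume $u\ge0$; the equation and a strong maximum principle then give $u>0$.

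\textbf{Main obstacle: showing $u(0)\neq u(2e_1)$.} The whole difficulty is that lattice minimizers are not translation-normalized; any translate of a minimizer is again a minimizer. So we must exclude every minimizer $w$ with $w(0)=w(2e_1)$, not only a centered one. The approach I would try first relies on the large-mass regime. For $a$ large, minimizers concentrate essentially on a single site: $u(x_0)^2\approx a$ at some $x_0$, and $u(x)\lesssim a^{-c}u(x_0)$ elsewhere. This follows by comparing $E^{a,p}_{\mathbb{Z}^d}\le \Phi(\sqrt a\,\delta_0)=d\,a-\frac{a^{p/2}}{p}$ with the constraint, which forces $\|u\|_\infty^{p-2}\gtrsim a^{(p-2)/2}$.
\begin{itemize}
\item If $x_0\in\{0,2e_1\}$, the condition $u(0)=u(2e_1)$ fails outright, since one of the two values is of order $\sqrt a$ and the other is small.
\item Otherwise, comparing the values at $0$ and $2e_1$ requires finer asymptotics, namely $u(x)\approx C\,u(x_0)\,\epsilon^{|x-x_0|_1}$ with coefficients depending on the lattice distance.
\end{itemize}
The second case is where I expect the difficulty. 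If the finer asymptotics prove too delicate, I would instead enlarge $E'$ to a star: connect $0$ by added edges to all vertices of $B_{R}\setminus\{0\}$. Then $u(0)=u(y)$ for every $y\in B_R$, so $u$ is constant on all of $B_R$. Once $R$ is large this contradicts the single-site concentration, because the peak lies either inside $B_R$, making $u$ large on the whole ball, or outside it, where it is incompatible with $u$ being constant there. In either case such a $u$ cannot attain $E^{a,p}_{\mathbb{Z}^d}$ with a constant energy margin.

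With $u(x)\neq u(y)$ for some $(x,y)\in E'$, we get $\Phi^p_G(u)>\Phi^p_{\mathbb{Z}^d}(u)\ge E^{a,p}_{\mathbb{Z}^d}=E^{a,p}_G$, a contradiction. Hence $E^{a,p}_G$ is not attained for $a>a_1$.
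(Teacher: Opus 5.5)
Your opening reduction matches the paper's. If $u$ attains $E^{a,p}_G$, then since $E^{a,p}_G=E^{a,p}_{\mathbb{Z}^d}$, $u$ is a lattice minimizer with $u(x)=u(y)$ on every added edge. The gap is in turning this into a contradiction.

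\textbf{The single edge.} The choice $E'=\{(0,2e_1)\}$ cannot be rescued by finer asymptotics. For a minimizer peaked at $e_1$, the vertices $0$ and $2e_1$ lie at the same lattice distance from the peak. If some lattice minimizer $w$ is symmetric under $x_1\mapsto -x_1$ (as one expects for the single-site minimizers at large mass), then $w(\cdot-e_1)$ satisfies $u(0)=u(2e_1)$ and actually attains $E^{a,p}_G$.

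\textbf{The star.} The case where the peak lies in $B_R$ is fine: from $\|u\|_{l^\infty}^2\ge c\,a$ you get $a\ge c|B_R|a$. The case where the peak lies outside $B_R$ is only asserted. All you know about $u$ away from the peak are upper bounds, and a small constant value on $B_R$ is consistent with all of them. Excluding it via strict exponential decay is exactly the fine asymptotic analysis you wanted to avoid.

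\textbf{The missing idea.} This is the heart of the paper's proof: read off the Lagrange multiplier at a vertex where $u$ is locally constant. Suppose $u$ equals a constant $c$ at some vertex and at all of its neighbours. The paper's $E'$, which joins both $0$ and each $\pm e_k$ to $B_R$, guarantees this at $0$; your star guarantees it at, e.g., $3e_1$ for $R\ge 5$. Then the equation gives $\lambda_a=c^{p-2}$. Since $u=c$ on essentially all of $B_R$, the mass constraint gives $\lambda_a\lesssim (a/|B_R|)^{(p-2)/2}$. Testing the equation with $u$ yields
$\Phi^p_G(u)=(\frac12-\frac1p)\int_G|\nabla_G u|_2^2dx-\frac{a}{p}\lambda_a\ge -\frac{a^{p/2}}{p|B_R|^{(p-2)/2}}$.
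This contradicts $E^{a,p}_{\mathbb{Z}^d}\le da-\frac1p a^{p/2}\le -\frac{1}{2p}a^{p/2}$ for large $a$ once $|B_R|^{(p-2)/2}>2$. Alternatively, evaluating the equation at a maximum point gives $\lambda_a\ge \|u\|_{l^\infty}^{p-2}-2d$, which is of order $a^{(p-2)/2}$.

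This argument does not depend on where the peak sits. So no case distinction, no concentration asymptotics and no requirement $a_1>\alpha_{\mathbb{Z}^d}$ are needed.
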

\begin{proof}
    We first take $a_1$ sufficiently large such that $E^{a,p}_{\mathbb{Z}^d}<0$ for all $a\geq a_1.$ Let $R>0$ be sufficiently large, to be specified later. We add edges $(0,y)$ if $y\in B_R$ and $(0,y)\notin E$. Also, we add edges $(\pm e_k,y)$ if $y\in B_R$ and $(\pm e_k,y)\notin E$. In the contrary, if $E^{a,p}_G$ can be attained by $u\in l^2(G)$. Since $\int_{G}\vert\nabla_G \vert u\vert\vert^p_p dx\leq\int_{G}\vert\nabla_G  u\vert^p_p dx,$ without loss of generality we assume $u\geq0$. Then there exists $\lambda_a$ such that 
    \begin{equation}\label{normalizedlagrange}
    -\Delta_G u+\lambda_a u= u^{p-1}  .  
    \end{equation}
    
    We claim that $u>0$. If this is not true. Suppose $x_0\in V$ such that $u(x_0)=0$. Then by equation (\ref{normalizedlagrange}) we have $\Delta_{G}u(x_0)=\sum\limits_{y\sim x}u(y)=0.$ Since $u(y)\geq 0$ for all $y\sim x_0,$ we have $u(y)=0$ for all $y\sim x.$ Repeating this process we have that $u=0$. This is a contradiction, and we prove the claim. \\
    
    Since $\Phi_G^p(u)=E^{a,p}_G=E^{a,p}_{\mathbb{Z}^d}$, we have $u(x)=u(y)$ for any $(x,y)\in E^{'}$. Particularly, $u(x)=u(0)$ for all $x\sim 0$. To see this, if $x\neq \pm e_k$, then $(x,0)\in E^{'}$, if $x=\pm e_k$, then we can find $y\in B_R$ such that $(y,\pm e_k)\in E^{'}$ and $(y,0)\in E^{'}$, hence $u(0)=u(y)=u(\pm e_k).$ Hence $\Delta_G u(0)=\sum\limits_{x\sim 0}(u(x)-u(0))=0$ and by equation (\ref{normalizedlagrange}) we have
    \[
    \lambda_a= u^{p-2}(0).
    \]
    One the one hand, multiply by $u$ on the both side of equation (\ref{normalizedlagrange}) and integrate by parts, we have
    \[
    \int_{G}\vert\nabla_G u\vert^2_2dx+\lambda_a\int_{G}u^2dx-\int_{G}\vert u\vert^pdx=0.
    \]
    But then we have 
    \begin{align*}
    \Phi_G^p(u)&=\frac{1}{2}\int_{G}\vert\nabla_G u\vert^2_2dx-\frac{1}{p}\int_{G}\vert u\vert^pdx\\
    &=(\frac{1}{2}-\frac{1}{p})\int_{G}\vert\nabla_G u\vert^2_2dx-\frac{1}{p}\lambda_a\int_{G}u^2dx\\    
    &=(\frac{1}{2}-\frac{1}{p})\int_{G}\vert\nabla_G u\vert^2_2dx-\frac{a}{p}\lambda_a\\
    &\geq -\frac{a}{p}\lambda_a.
    \end{align*}
    Since $\lambda_a= u^{p-2}(0)$ and $\vert B_R\vert u^2(0)\leq \int_{\mathbb{Z}^d}u^2dx=a$, we have
    \[
    E^{a,p}_{\mathbb{Z}^d}=E^{a,p}_{G}=\Phi_G^p(u)\geq -\frac{a}{p}\lambda_a\geq-\frac{1}{p\vert B_R\vert^{\frac{p-2}{2}}}a^{\frac{p}{2}}.
    \]
    On the other hand. Take function $u=\sqrt{a}\delta_0$, then $\Vert u\Vert_{l^2(\mathbb{Z}^d)}^2=a$. we have
    \[
    E^{a,p}_{\mathbb{Z}^d}\leq\Phi^p_{\mathbb{Z}^d}(u) = da-\frac{1}{p}a^{\frac{p}{2}}\leq -Ca^{\frac{p}{2}}
    \]
    for all $a\geq a_1$, where $a_1$ is sufficiently large. Hence taking $R$ sufficiently large such that $-\frac{1}{p\vert B_R\vert^{\frac{p-2}{2}}}>-C$ leads to a contradiction. \\

\end{proof}
Next, we move to the proof for (2) in Theorem \ref{Thm 2}. Similar to Lemma \ref{lemma:addcompare}, we need the following lemma.
\begin{lemma}
    For all $a>0,$ $ J^{a,p,q}_{G}\leq J^{a,p,q}_{\mathbb{Z}^d}$.
\end{lemma}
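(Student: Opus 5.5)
We follow the argument of Lemma \ref{lemma:addcompare}: take an almost-minimizer for $J^{a,p,q}_{\mathbb{Z}^d}$, translate it far from the ball $B_R$ that contains all the added edges, and show that the extra edge terms become negligible. Fix $\epsilon>0$. Since $C_0(\mathbb{Z}^d)$ is dense in $D^{1,p}(\mathbb{Z}^d)$ and the constraint can be restored by scaling (as in (3) of Lemma \ref{sobolevfunproperty}), we pick $u$ with $\Vert u\Vert_{l^q(\mathbb{Z}^d)}^q=a$ and
\[
\int_{\mathbb{Z}^d}\vert\nabla_{\mathbb{Z}^d}u\vert_p^pdx\leq J^{a,p,q}_{\mathbb{Z}^d}+\epsilon .
\]
Working with compactly supported $u$ is the simplest choice, and it is available here. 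Alternatively, $u\in l^q$ already gives $u(x)\to0$ as $\vert x\vert_\infty\to\infty$, so we could choose $R_1$ with $\vert u(x)\vert\leq\epsilon$ on $B_{R_1}^c$.

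Next we translate, setting $\tilde u(x)=u(x+3R_1e_1)$ with $R_1\geq R$ large. If $u$ has compact support we take $R_1$ so large that $\tilde u$ vanishes on $B_R$. Translation invariance on $\mathbb{Z}^d$ preserves both $\Vert\tilde u\Vert_{l^q}^q=a$ and the $\mathbb{Z}^d$-Dirichlet energy. Since $G$ and $\mathbb{Z}^d$ share the vertex set, $\tilde u$ is also admissible for $J^{a,p,q}_G$. The energies differ only by the added edges:
\[
\int_G\vert\nabla_G\tilde u\vert_p^pdx=\int_{\mathbb{Z}^d}\vert\nabla_{\mathbb{Z}^d}\tilde u\vert_p^pdx+\sum_{(x,y)\in E^{'}}\vert\tilde u(x)-\tilde u(y)\vert^p .
\]
The constant in front of the sum depends on the convention for $\vert\nabla_G\cdot\vert_p$, but it is irrelevant here.

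Every added edge has both endpoints in $B_R$, and $E^{'}$ is finite. The sum is therefore either exactly $0$ in the compact-support version, or at most $\vert E^{'}\vert(2\epsilon)^p$ in the decay version. This gives
\[
J^{a,p,q}_G\leq J^{a,p,q}_{\mathbb{Z}^d}+\epsilon+C\epsilon^p,
\]
and letting $\epsilon\to0$ proves the lemma.

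The only point needing care is the choice of the approximating function: the infimum is taken over $l^q$ functions, but the energy must be finite and approximable. Approximating by compactly supported functions removes this issue, because the constraint $\Vert u\Vert_{l^q}^q=a$ is preserved after rescaling, and the energy changes only by a factor tending to $1$. This uses the continuity argument already given in Lemma \ref{sobolevfunproperty} (3).
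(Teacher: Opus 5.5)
Your decay version is correct and is exactly the paper's argument. You take an $\epsilon$-almost minimizer on $\mathbb{Z}^d$ and translate it by $3R_1e_1$ so that it is at most $\epsilon$ on $B_R$; your explicit requirement $R_1\geq R$ makes precise what the paper leaves implicit. The translate then picks up only the finitely many added-edge terms, each $O(\epsilon^p)$.

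The compact-support variant is unnecessary, since any almost minimizer already has finite energy because $J^{a,p,q}_{\mathbb{Z}^d}<\infty$. Also, your appeal to Lemma \ref{sobolevfunproperty}(3) does not by itself give density of $C_0(\mathbb{Z}^d)$ in the constraint set. If you want that variant, use the truncation $\mathrm{sgn}(u)(\vert u\vert-\delta)_+$, which is finitely supported for $u\in l^q$, does not increase the energy, and has $l^q$ norm tending to that of $u$ as $\delta\to0$.
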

\begin{proof}
    For any $\epsilon>0$, let $u\in l^q(\mathbb{Z}^d)$ such that $\Vert u\Vert_{l^q(\mathbb{Z}^d)}^q=a$ and $ \int_{\mathbb{Z}^d}\vert\nabla_{\mathbb{Z}^d} u\vert^p_pdx\leq J^{a,p,q}_{\mathbb{Z}^d}+\epsilon$. Take $R_1>0$ sufficiently large such that $\vert u(x)\vert<\epsilon$ for all $x\in B_{R_1}^c$. Let $\tilde{u}(x)=u(x+3R_1e_1)$, then we have
    \begin{align*}
        J^{a,p,q}_{G}&\leq\int_{G}\vert\nabla_{G} \tilde{u}\vert^p_pdx\\
        &=\int_{\mathbb{Z}^d}\vert\nabla_{\mathbb{Z}^d} \tilde{u}\vert^p_pdx+\sum\limits_{(x,y)\in E^{'}}\vert \tilde{u}(x)-\tilde{u}(y)\vert^p\\
        &\leq \int_{\mathbb{Z}^d}\vert\nabla_{\mathbb{Z}^d} \tilde{u}\vert^p_pdx+C\epsilon\\
        &\leq J^{a,p,q}_{\mathbb{Z}^d}+C\epsilon.
    \end{align*}
    Since $\epsilon>0$ is arbitrary, we have the result.
\end{proof}
It is also obvious that $J^{a,p,q}_{\mathbb{Z}^d}\leq J^{a,p,q}_{G}$, we have $J^{a,p,q}_{\mathbb{Z}^d}=J^{a,p,q}_{G}$. Now we can prove (2) in Theorem \ref{Thm 2}.
\begin{theorem}
    There exists $E^{'}$ such that $J^{a,p,q}_G$ can not be attained for all $a>0$.
\end{theorem}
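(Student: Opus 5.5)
We mimic the proof of (1) in Theorem \ref{Thm 2}, using the same ``star'' construction of $E^{'}$. Fix $R>0$ large, to be chosen later. For every $y\in B_R$ with $(0,y)\notin E$ we add the edge $(0,y)$, and for each $k$ we add $(\pm e_k,y)$ whenever $y\in B_R$ and $(\pm e_k,y)\notin E$. By the preceding lemma and the trivial reverse inequality, $J^{a,p,q}_G=J^{a,p,q}_{\mathbb{Z}^d}$ for all $a>0$. Suppose for contradiction that $J^{a,p,q}_G$ is attained by some $u$ with $\Vert u\Vert_{l^q(G)}^q=a$.

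Since
\[
\int_G\vert\nabla_G u\vert_p^p dx=\int_{\mathbb{Z}^d}\vert\nabla_{\mathbb{Z}^d}u\vert_p^pdx+\frac12\sum_{(x,y)\in E^{'}}\vert u(x)-u(y)\vert^p\ \geq\ J^{a,p,q}_{\mathbb{Z}^d}+\frac12\sum_{(x,y)\in E^{'}}\vert u(x)-u(y)\vert^p,
\]
equality with $J^{a,p,q}_G=J^{a,p,q}_{\mathbb{Z}^d}$ forces two things. First, $u(x)=u(y)$ for every $(x,y)\in E^{'}$. Second, $u$ is also a minimizer of $J^{a,p,q}_{\mathbb{Z}^d}$. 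The star structure then gives that $u$ is constant on $B_R$, exactly as in the previous theorem. Indeed, each $y\in B_R$ not adjacent to $0$ in $\mathbb{Z}^d$ is joined to $0$ by an added edge. The lattice neighbours $\pm e_k$ share a common added neighbour $y\in B_R$ with $0$ (for $R\ge 3$). So $u\equiv u(0)$ on $B_R$.

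For the contradiction we would argue directly with the Sobolev quotient. Replacing $u$ by $\vert u\vert$ keeps it a minimizer, so we may assume $u\ge0$. Because $u$ is constant $c=u(0)$ on $B_R$, we get $c^q\vert B_R\vert\le a$. The key point is that a minimizer of $J^{a,p,q}_{\mathbb{Z}^d}$ satisfies the Euler--Lagrange equation
\[
-\Delta_p u=\mu u^{q-1}\quad\text{on }\mathbb{Z}^d,
\]
with $\mu>0$ determined by testing with $u$: $\mu a=J^{a,p,q}_{\mathbb{Z}^d}$. The maximum-principle argument of the previous proof gives $u>0$ everywhere. Now take any interior point $x$ of $B_R$, for instance $x=0$. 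All its lattice neighbours lie in $B_R$, so $\Delta_p u(x)=0$, while $\mu u(x)^{q-1}>0$. This is a contradiction, and the choice of $R$ is irrelevant beyond $R\ge3$.

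The main obstacle is the case $p=1$, where $J$ is not differentiable and the Euler--Lagrange equation is unavailable. There we would instead use a direct variation. Perturb $u$ by $u+t\delta_0$ with $t>0$ small. Since $u$ is constant near $0$, the gradient term changes by exactly $t\cdot\frac12\cdot 2d\cdot 2=2dt$, which is first order and positive. The $l^q$ norm increases by $qc^{q-1}t+o(t)$. Rescaling back to norm $a$ using homogeneity (Lemma \ref{sobolevfunproperty}(1), $J^{\theta a}=\theta^{p/q}J^{a}$) lowers the energy by roughly $\frac{p}{q}J^{a}\frac{qc^{q-1}t}{a}$. Hence minimality requires $2d\ge pJ^{a,p,q}_{\mathbb{Z}^d}c^{q-1}/a$. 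This is not immediately contradictory. Instead, we use a negative perturbation $u-t\delta_0$: the $l^q$ norm decreases by first order, while the gradient term also rises by $2dt$, so the energy strictly increases to first order, which seems harmless. We would therefore rely on the equation in the case $p>1$. For $p=1$ we would use the subdifferential version, $0\in\partial\Vert u\Vert_{D^{1,1}}-\mu u^{q-1}$: at a point where all neighbouring differences vanish, the subgradient can be chosen as zero, and attaining $\mu c^{q-1}>0$ requires subgradient values summing appropriately. This is the step to check carefully.
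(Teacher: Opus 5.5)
For $1<p<d$ your argument is correct and is essentially the paper's proof. Equality $J^{a,p,q}_G=J^{a,p,q}_{\mathbb{Z}^d}$ forces $u$ to be constant across the added edges, hence on $B_R$. The strong maximum principle gives $u>0$. The Euler--Lagrange equation at $0$ then reads $0=\mu u(0)^{q-1}>0$. The only difference is harmless: you write the Euler--Lagrange equation on $\mathbb{Z}^d$, using that $u$ also minimizes $J^{a,p,q}_{\mathbb{Z}^d}$, whereas the paper writes it on $G$.

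The genuine gap is $p=1$, and you should not try to close it, because the statement is false there. For $p=1$ the energy is $\sum_{\{x,y\}}\vert u(x)-u(y)\vert$. It suffices to take $u\geq 0$, since replacing $u$ by $\vert u\vert$ does not increase the energy. For $u\geq 0$ in $l^q(\mathbb{Z}^d)$, three facts combine:
\begin{enumerate}[(i)]
\item the coarea formula gives $\sum_{\{x,y\}\in E}\vert u(x)-u(y)\vert=\int_0^\infty\vert\partial\{u>t\}\vert\,dt$, where $\vert\partial A\vert$ is the number of lattice edges joining $A$ to its complement;
\item Minkowski's inequality gives $\Vert u\Vert_{l^q}\leq\int_0^\infty\vert\{u>t\}\vert^{1/q}\,dt$;
\item the edge-isoperimetric inequality (via Loomis--Whitney) gives $\vert\partial A\vert\geq 2d\vert A\vert^{(d-1)/d}\geq 2d\vert A\vert^{1/q}$ for finite nonempty $A$, since $q\geq\frac{d}{d-1}$.
\end{enumerate}
Hence $J^{a,1,q}_{\mathbb{Z}^d}=2d\,a^{1/q}$, attained by $a^{1/q}\delta_x$ for every $x$. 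Now choose $x$ not incident to any edge of $E'$; this is possible because $E'$ is finite. Then $\int_G\vert\nabla_G(a^{1/q}\delta_x)\vert_1\,dx=2d\,a^{1/q}=J^{a,1,q}_{\mathbb{Z}^d}\leq J^{a,1,q}_G$. So $J^{a,1,q}_G$ is attained for every finite $E'$ and every $a>0$.

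This explains what you ran into. With your star construction, this minimizer vanishes identically on $B_R$, so the positivity/strong maximum principle your contradiction needs fails for $p=1$. Moreover, at a vertex where neighbouring values coincide, the subdifferential of the energy contains a whole interval, so no first-order or subgradient condition can produce a contradiction. Your own perturbation computation reduces to $c^q\leq a$, which always holds.

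The paper's proof has the same defect: its equation $-\Delta_p u=\lambda u^{q-1}$ involves $\vert u(y)-u(x)\vert^{p-2}$, which is meaningless at $p=1$. The correct statement is the one for $1<p<d$, and that is exactly what your argument proves.
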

\begin{proof}
     We take $R>10.$ We add edges $(0,y)$ if $y\in B_R$ and $(0,y)\notin E$. Also, we add edges $(\pm e_k,y)$ if $y\in B_R$ and $(\pm e_k,y)\notin E$. If $J^{a,p,q}_G$ can be attained. Suppose $J^{a,p,q}_G$ is attained by $u$. Since $\int_{G}\vert\nabla_G \vert u\vert\vert^p_p dx\leq\int_{G}\vert\nabla_G  u\vert^p_p dx,$ we assume $u\geq0$ without loss of generality. Then for some Lagrange multiplier $\lambda\in\mathbb{R}$ we have
     \begin{equation}\label{equa:sobolevlagrangrain}
     -\Delta_pu = \lambda u^{q-1},    
     \end{equation}
     where $\Delta_p$ is $p$-Laplacian defined as follows.
     \[
     \Delta_p u(x)=\sum\limits_{y\sim x_0}\vert u(y)-u(x)\vert^{p-2}(u(y)-u(x))
     \]
     for $u\in C(G)$. Multiply by $u$ on both side of equation (\ref{equa:sobolevlagrangrain}) and integrate by parts, we have
     \[
     \int_{G}\vert\nabla_G u\vert^p_pdx=\lambda\int_{G}u^qdx.
     \]
     Hence $\lambda\neq 0$. We claim that $u>0$. If this is not true, suppose $x_0\in V$ such that $u(x_0)=0$, then by equation (\ref{equa:sobolevlagrangrain}) we have $\Delta_p u(x_0)=\sum\limits_{y\sim x_0}\vert u(y)\vert^{p-2}u(y)=0.$ But $u(y)\geq0$ for all $y\sim x_0$. We have $u(y)=0$ for all $y\sim x$. Repeating this process, we have $u=0$, which is a contradiction, and we prove the claim. Since $J^{a,p,q}_G=J^{a,p,q}_{\mathbb{Z}^d}$, we have $u(x)=u(y)$ for all $(x,y)\in E^{'}.$ Hence $-\Delta_G u(0)=0$, which implies $\lambda=0$ by equation (\ref{equa:sobolevlagrangrain}) since $u(0)\neq0$. This is a contradiction.              
\end{proof}
\textbf{Acknowledgements.}
The author is deeply grateful to Prof. Bobo Hua for raising this interesting problem, engaging in insightful discussions, offering constructive feedback, and providing unwavering support.


\end{document}